\let\@fnsymbol\@arabic
\newtheorem{theorem}{Theorem}[section]
\newtheorem{lemma}[theorem]{Lemma}
\newtheorem{proposition}[theorem]{Proposition}
\newtheorem{corollary}[theorem]{Corollary}
\newtheorem*{parity}{Parity Lemma}
 \theoremstyle{definition}
\newtheorem{question}[theorem]{Question}
\newtheorem*{sdbd}{Standard description of a band decomposition or
        ribbon graph}
\def\jau#1{{\frenchspacing #1}} 
\def\jti#1{\textit{\frenchspacing #1}} 
\def\jvo#1{\textbf{#1}} 
\def\lline{\hbox to\hsize} 
\let\implies\Rightarrow
\let\cgr\equiv
\def\mR{{\mathbb{R}}}
\def\mZ{{\mathbb{Z}}}
\long\def\ignore#1{}
\let\Si\Sigma
\let\rD\Delta
\let\rP\Pi
\let\bar\overline
\def\mmod{{\fam0 mod\ }}
\def\deg{{\fam0 deg}}
\def\sep{Z}			
\def\sepsd{{\sep}^+}
\def\vs{z}
\def\hs#1{\zeta({#1})}
\def\hssd#1{\zeta^+({#1})}
\let\bdy\partial
\def\cobdy{\delta}
\def\bary{S}			
\def\baryd{\bary^*}
\def\baryv{u}
\def\rad{\bary_{02}}
\def\final#1{#1'}
\def\ug#1{\Gamma({#1})}
\def\ir{c}
\def\ib{a}
\def\iq{q}
\def\iR{C}
\def\iB{A}
\def\iQ{Q}
\def\fR{\final{\iR}}
\def\fr{\final{\ir}}
\def\fB{\final{\iB}}
\def\fb{\final{\ib}}
\def\fQ{\final{\iQ}}
\def\fv{\final{v}}
\def\ff{\final{f}}
\let\sd\oplus
\def\bd{N}
\def\rg{O}
\def\embg#1{\Theta({#1})}
\def\bunv{\bigcup_{v \in V(G)}}
\def\bune{\bigcup_{e \in E(G)}}
\def\bund{\bigcup_{e \in D}}
\def\bunf{\bigcup_{f \in F(G)}}
\def\buni{\bigcup_{i=1}^k}
\def\vk{y}
\def\ek{d}
\def\vkvg{\nu}
\def\ekcr{\chi}
\def\vkfg{\phi}
\def\vkeg{\varepsilon_G}
\def\vkeb{\varepsilon_{\bary}}
\def\ekeg{\varepsilon_G}
\def\wkwg{\omega}
\def\VPPV{\final{V_{\fam0 ppv}}}
\def\VDPF{\final{V_{\fam0 dpf}}}
\def\VTDR{\final{V_{\fam0 tdb}}}
\def\FPPF{\final{F_{\fam0 ppf}}}
\def\FDPV{\final{F_{\fam0 dpv}}}
\def\FTPR{\final{F_{\fam0 tpb}}}
\let\implies\Rightarrow
\let\iff\Leftrightarrow
\let\tofrom\leftrightarrow
\def\eg{g}
\def\vg{x}
\def\crg{L}
\def\pet{^\times}
\begin{document}

\title{{\bf Partial duality and closed 2-cell embeddings} \\
	\quad \\
	{\sl To Adrian Bondy on his 70th birthday}}

\author{%
  M. N. Ellingham%
	\thanks{Supported by National Security Agency grant
H98230-13-1-0233 and Simons Foundation award 245715.}
${}^{,3}$%
	\\
  Department of Mathematics, 1326 Stevenson Center\\
  Vanderbilt University, Nashville, Tennessee 37240, U.S.A.\\
  \texttt{mark.ellingham@vanderbilt.edu}%
 \and
  Xiaoya Zha%
	\thanks{Supported by National Security Agency grant
H98230-13-1-0216.}
 ${}^,$\footnote{The United States
Government is authorized to reproduce and distribute reprints
notwithstanding any copyright notation herein.}%
	\\
  Department of Mathematical Sciences, Box 34 \\
  Middle Tennessee State University \\
  Murfreesboro, Tennessee 37132, U.S.A. \\
	\texttt{xiaoya.zha@mtsu.edu}%
}
 \date{April 28, 2016; to appear in \textit{Journal of Combinatorics}}
\maketitle

 \begin{abstract}
 In 2009 Chmutov introduced the idea of partial duality for embeddings
of graphs in surfaces.  We discuss some alternative descriptions of
partial duality, which demonstrate the symmetry between vertices and
faces.  One is in terms of band decompositions, and the other is in
terms of the \emph{gem} (graph-encoded map) representation of an
embedding.
 We then use these to investigate when a partial dual is a \emph{closed
$2$-cell embedding}, in which every face is bounded by a cycle in the
graph.
 We obtain a necessary and sufficient condition for a partial dual to be
closed $2$-cell, and also a sufficient condition for no partial dual to
be closed $2$-cell.
 \end{abstract}

 \section{Introduction}\label{intro}

 In this paper a {\em surface\/} $\Si$ means a connected compact $2$-manifold
without boundary.
 By an {\em open\/} or {\em closed disk\/} in $\Si$ we mean a subset of
the surface homeomorphic to such a subset of $\mR^2$.  By a \emph{simple
closed curve} or \emph{circle} in $\Si$ we mean an image of a circle in
$\mR^2$ under a continuous injective map; a \emph{simple arc} is a
similar image of $[0,1]$.
 The closure of a set $S$ is denoted $\bar S$, and the boundary is
denoted $\bdy S$.

 All our graphs are finite and may (and frequently do) have multiple
edges or loops.
 An \emph{embedding} of a graph in a surface maps each vertex to a point
and each edge to a simple arc or simple closed curve, so that the only
pairwise intersections of those points and arcs correspond to the
incidences between an edge and its endvertices.  For convenience, we
usually do not distinguish between a vertex or edge of a graph and its
image in a graph embedding.
 A \emph{face} of a graph embedding is a connected component of the
topological subspace of the surface obtained by deleting the images of
all vertices and edges.
 If two embeddings of a graph are related by a homeomorphism of the
surface we consider them to be equivalent.
 We typically use $G$ to denote not just a graph, but a graph embedding,
with vertex set $V(G)$, edge set $E(G)$ and face set $F(G)$; $\ug{G}$
denotes the underlying graph.
 %
 %

 A graph embedding in a surface is \emph{$2$-cell}, \emph{open $2$-cell}
or \emph{cellular} if every face is an open disk.  A $2$-cell embedding
is a \emph{closed $2$-cell} embedding if every face is bounded by a
simple closed curve; equivalently, the boundary walk of each face is a
cycle in the graph.
 The graph is necessarily nonseparable, so if it has at least two
edges it has no loops and no cutvertices.
 (There is some ambiguity in the definition of `closed $2$-cell' in
the literature.  The definition we use is the one given by Barnette
\cite{Barn87} in what seems to be the first mention of the concept.  
 Another common definition is that the closure of every face is a closed
disk; this is not quite the same, because it allows the boundary of a
face, considered as a subgraph, to be a cycle with attached trees.  For
$2$-connected graphs the two definitions are equivalent.  Our graphs,
however, are not necessarily $2$-connected so it is important to be
clear about which definition we are using.)

 %

 Because all faces in a $2$-cell embedding are homeomorphic to an open
disk, we can describe such an embedding without giving topological
information for each face.  In fact, it is well known that $2$-cell
embeddings can be described in a completely combinatorial way, using a
\emph{rotation system} with \emph{edge types} or \emph{edge signatures}.
 See \cite[Section 3.2]{GT} and \cite[Sections 3.2, 3.3]{MT}. 
 A $2$-cell embedding can also be described in a number of other ways,
including as a \emph{band decomposition} or as a \emph{reduced band
decomposition}, also called a \emph{ribbon graph}.
 We assume the reader is familiar with these different representations
of graph embeddings.

 For the remainder of this paper all embeddings will be $2$-cell
embeddings of graphs in surfaces.
 We will sometimes mention this explicitly, but often just assume it
implicitly.
 Since our surfaces are connected, these are necessarily embeddings of
connected graphs.

 It is well known that for every $2$-cell graph embedding $G$ there is a
unique \emph{dual} graph embedding $G^*$ (sometimes called the
\emph{geometric dual}) in the same surface, obtained as follows.  Insert
one vertex of $G^*$ in each face of $G$, and for each edge $e$ of $G$
add an edge $e^*$ in $G^*$, crossing $e$, which joins the vertices of
$G^*$ corresponding to the faces on either side of $e$.  Then $G^*$ is
also $2$-cell, the faces of $G^*$ correspond to the vertices of $G$ and
vice versa, and $(G^*)^* = G$.  Since there is a bijection $e \tofrom
e^*$, $e$ and $e^*$ are often thought of as the same object.  We will
distinguish between them only when necessary (usually when considering
them as actual curves in a surface).
 %
 %

 There are other duals for graph embeddings.  For example, the
\emph{Petrie dual} is formed by toggling the signature of every edge
(from $+1$ to $-1$, or vice versa) in a representation of the embedding
in terms of rotation systems and edge signatures.  It is clear that it
is possible to do this in a partial fashion, by toggling the signatures
for some subset of edges, instead of for all edges.  Taking the partial
Petrie dual is a useful operation.
 For example, the process used by the authors in \cite{EZ11} to convert
projective-planar embeddings into orientable closed $2$-cell embeddings,
although described in that paper as a multistep process of inserting
crosscaps, then cutting through them and capping off, is really just
taking partial Petrie duals.

 No one, however, seems to have considered a partial version of
geometric duality for graph embeddings, until Chmutov \cite{Chmu09} in
2009.
 He was motivated the desire to unify several Thistlethwaite-type
theorems; results of this type relate polynomials for knots or links to
graph polynomials.
 He defines the partial dual, with respect to a subset of edges, for a
signed graph, where each edge is assigned either $+$ or $-$.
 His main result is an invariance formula for a certain specialization
of the Bollob\'{a}s-Riordan polynomial of signed graphs under partial
duality.
 The Bollob\'{a}s-Riordan polynomial is formulated in terms of ribbon
graphs \cite{BR02}.
 Chmutov's original definition of partial duality therefore used ribbon
graphs, and another related representation for $2$-cell embeddings,
which he calls an \emph{arrow presentation}.

 Our interest is purely in the topological structure of a graph
embedding under the partial duality operation, and we therefore ignore
	the part of Chmutov's operation that involves changing the signs of
edges, and just deal with unsigned graphs.
 Several alternative constructions for Chmutov's
partial duality operation have appeared in the literature, and are
summarized in \cite[Section 2.2]{EM}.
 All of these are based on ribbon graphs and arrow presentations.  We
will provide details of one of these constructions in Section
\ref{pd-gem-surf}.

 Section \ref{pd-gem-surf} discusses two alternative representations of
partial duality that do not use ribbon graphs or arrow presentations.
 The first uses the band decomposition representation of an embedding.
 The second uses a completely combinatorial representation of a graph
embedding as a \emph{graph-encoded map}, or \emph{gem}.
 Both of these definitions give explicit descriptions of the faces as
well as the vertices of the partial dual, and demonstrate symmetry with
respect to vertices and faces.

 In Section \ref{c2c} we consider the question of whether a partial dual
is closed $2$-cell.  
 We obtain a necessary and sufficient condition for a partial dual
of a general graph embedding to be closed $2$-cell.  This has a
particularly simple form for initial embeddings that are closed
$2$-cell and satisfy certain additional properties.
 We also obtain a sufficient condition for no partial dual to be closed
$2$-cell.

 Before we begin we offer an apology of sorts to the reader.  Much of
our work is just manipulating definitions, and converting ideas from one
representation of an embedding into another.   We work with many
different ways of looking at an embedding, and many related but not
identical concepts, and the reader may find this confusing.
 Unfortunately, this seems to be necessary to get anywhere in this area:
different representations are useful for different purposes.
 We have tried to keep the notation as consistent as possible, to help
the reader follow what is going on.

 \section{Partial duality for band decompositions and gems}
 \label{pd-gem-surf}

 In this section we discuss two representations of partial duality. 
 Graph embeddings can be described in many ways, and so there are also
many ways to describe partial duality.
 The motivation of the two descriptions we give here is that they enable
our work in Section \ref{c2c}, characterizing closed $2$-cell partial
duals.
 The description in terms of band decompositions (Proposition
\ref{bd-pd}) is convenient in situations where we deal with an actual
embedding, rather than a model such as a ribbon graph.
 We specialize this to a particular type of band decomposition which we
call a \emph{corner graph}.
 The description in terms of gems (Proposition \ref{gem-pd}) is very
simple.
 We provide examples which we hope will be helpful for understanding.

 We briefly remind the reader how band decompositions and ribbon graphs
of a graph embedding $G$ are found; for a complete description see
\cite[Subsection 3.2.1]{GT}.
 For the \emph{band decomposition} we `fatten' both vertices and edges
into closed disks, known as \emph{$0$-bands} for the vertices, and
\emph{$1$-bands} for the edges; each face then also corresponds to a
closed disk, called a \emph{$2$-band}.
 A \emph{reduced band decomposition} or \emph{ribbon graph}
representation may then be obtained by discarding the $2$-bands.

 For adjacent edges of $G$, their $1$-bands may or may not intersect, at
a point.  We can modify a band decomposition so that they always
intersect, or always do not intersect.
 To allow adjacent $1$-bands to intersect when they are the same band,
we loosen the definition of a $1$-band a little by allowing it to be a
closed disk with two points on its boundary identified, or a closed disk
with two pairs of points $p_1$ and $p_2$, $p_3$ and $p_4$ identified,
where $p_1, p_2, p_3, p_4$ appear in that order around the boundary.

 \begin{sdbd}
 We will describe a band decomposition or ribbon graph of a graph
embedding $G$ in the following standard way.  The $0$-bands will be
denoted $\bar{\ir_v}$ for $v \in V(G)$; $\ir_v$ is the interior of the band.
 Similarly the $1$-bands will be denoted $\bar{\iq_e}$ for $e \in E(G)$,
and the $2$-bands (if we have them) will be denoted $\bar{\ib_f}$ for $f
\in F(G)$.  The boundaries of the bands in a band decomposition $\bd$ of
a connected graph embedding with at least one edge may be regarded as an
embedded graph $\embg{\bd}$: the vertices are points where three or four
bands (one $0$-band, one $2$-band, and one or two $1$-bands) meet, the
boundary segments between these points define the edges, and the
interiors of the bands are the faces.  In $\embg{\bd}$, every face
$\ir_v$ is bounded by a cycle $\iR_v$, every face $\ib_f$ is bounded by
a cycle $\iB_f$, and every face $\iq_e$ is bounded by a closed trail (no
repeated edges) $\iQ_e$ of length four, which is a cycle except possibly
when $e$ is incident in $G$ with a vertex or face of degree one.  If
$\rg$ is a ribbon graph we form a band decomposition $\bd$ by gluing a
$2$-band along every boundary component, and then define
$\embg{\rg}=\embg{\bd}$.

 We let $\iR = \bunv \iR_v$ and $\iB = \bunf \iB_f$.
 For any $D \subseteq E(G)$ we let $\iQ_D = \bund \iQ_e$, $\iR_D = \iR
\cap \iQ_D$ and $\iB_D = \iB \cap \iQ_D$.
 The edges of each $\iQ_e$ alternate between edges of $\iR$ and of $\iB$.

 The following may help the reader remember the notation above.
 We can think of colouring the edges of $\embg{\bd}$, generalizing the
standard colouring of edges in a graph-encoded map, which we discuss
later in this section.  An edge between $0$- and $1$-bands is red, an
edge between $0$- and $2$-bands is yellow, and an edge between $1$- and
$2$-bands is blue.
 We use this colour coding consistently in our figures.
 The boundary of a $0$-band contains red edges and possibly
some yellow edges, so we use the letter $C$ for \emph{crimson}, a type of red. 
 The boundary of a $2$-band contains blue edges and possibly some yellow
edges, so we use the letter $A$ for \emph{azure}, a type of blue.  (We
cannot use $R$ and $B$ because we use those later for edges in a
graph-encoded map.)
 The boundary of a $1$-band is a quadrilateral (with alternating red and
blue edges), so we use the letter $Q$.
 \end{sdbd}

 The following description of partial duals is convenient for us because
it uses only ribbon graphs; it does not require arrow presentations.

 \begin{proposition}[Bradford, Butler and Chmutov {\cite[Definition
1.2]{BBC}}]
 \label{rg-pd}
 Given a connected graph embedding $G$ represented by a ribbon graph
$\rg$, and $D \subseteq E(G)$, we may construct a ribbon graph $\rg^D$
corresponding to $G^D$, the partial dual of $G$ with respect to $D$, as
follows.

 \smallskip
 \noindent \textnormal{(1)}
 Take the (possibly disconnected) ribbon subgraph
 $\rD$ of $\rg$ consisting of bands $\bar{\ir_v}$ for $v \in V(G)$ and
$\bar{\iq_e}$ for $e \in D$ (using our standard description above).
 Let the boundary of $\rD$ be $\fR_1 \cup \fR_2 \cup \ldots \cup \fR_k$,
where each $\fR_i$ is a simple closed curve, representing a cycle in
$\embg{\rg}$.

 \smallskip
 \noindent \textnormal{(2)}
 Add to $\rg$ a new $0$-band $\bar{\fr_i}$ with boundary
$\fR_i$ for $i = 1, 2, \ldots, k$.
 Then remove every $0$-band interior $\ir_v$ for $v \in V(G)$ to give
$\rg^D$.
 \end{proposition}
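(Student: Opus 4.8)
The plan is to treat the two numbered steps in turn, verifying that each is well defined in the standard description of the previous subsection, and then to identify the resulting object with BBC's partial dual by a translation of notation. The topological content is light; the work is in reading the construction off the coloured boundary graph $\embg{\rg}$.

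For Step~(1), I would first note the purely topological fact that $\rD$, being a regular neighbourhood of the spanning subgraph $(V(G),D)$ inside the surface, is a compact $2$-manifold with boundary; hence $\bdy\rD$ is a closed $1$-manifold, that is, a disjoint union of circles $\fR_1,\ldots,\fR_k$, each automatically a simple closed curve. To see that each $\fR_i$ is the image of a cycle in $\embg{\rg}$, I would locate $\bdy\rD$ within the coloured boundary graph. Since $\rD$ consists of the $0$-bands together with the $1$-bands $\bar{\iq_e}$ for $e\in D$, while the $2$-bands are excluded, the segments of $\bdy\rD$ are exactly the yellow edges, the red edges of the $\iQ_e$ with $e\notin D$, and the blue edges of the $\iQ_e$ with $e\in D$; the red edges of the $\iQ_e$ with $e\in D$ become interior to $\rD$. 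Each boundary segment of the surface $\rD$ is traversed exactly once, so each $\fR_i$ is a closed trail, and I would finish with a local check at the $3$- and $4$-valent vertices of $\embg{\rg}$ that no vertex is used twice, so that $\fR_i$ is a cycle, with the same degenerate exceptions at degree-one vertices or faces of $G$ already flagged for the $\iQ_e$ in the standard description.

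For Step~(2), capping each $\fR_i$ with a new $0$-band $\bar{\fr_i}$ and deleting the old interiors $\ir_v$ leaves a legitimate ribbon graph. Each $1$-band $\bar{\iq_e}$ now meets the new $0$-bands along a pair of arcs, namely its two red edges when $e\notin D$ and its two blue edges when $e\in D$, all of which lay on $\bdy\rD$ and hence on the curves $\fR_i$; the complementary pair of edges of $\iQ_e$ becomes free, so the attaching data of $\rg^D$ are well formed, with the degree-one cases contributing a single exceptional arc. To identify $\rg^D$ with $G^D$, I would observe that this is precisely BBC's Definition~1.2 once their notation is matched to the standard description: their spanning ribbon subgraph on $D$ is our $\rD$, its boundary components become the vertices of the dual, and the edges outside $D$ are retained. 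Hence $\rg^D$ represents the partial dual essentially by definition, and nothing beyond the well-definedness just checked is required.

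The step I expect to be the main obstacle is the cycle claim at the end of Step~(1): showing that each boundary component $\fR_i$, read as a walk in $\embg{\rg}$, visits no vertex twice. This is not automatic from the simplicity of $\fR_i$ as a curve, since at a $4$-valent vertex of $\embg{\rg}$ (where a $0$-band, a $2$-band and two $1$-bands meet) the boundary of $\rD$ could a priori return to the same vertex along a second, disjoint pair of incident edges. Ruling this out from the local band structure, together with a careful accounting of the degenerate configurations at degree-one vertices or faces, is the delicate part of the verification.
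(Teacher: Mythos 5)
The paper offers no proof of this statement: Proposition~\ref{rg-pd} is imported verbatim as Definition~1.2 of Bradford, Butler and Chmutov and serves as the paper's working \emph{definition} of partial duality, so there is nothing here to compare your argument against. That said, your write-up is a sound verification that the construction is well defined, and its one substantive computation --- that $\bdy\rD$ consists of the yellow edges, the red edges of the $\iQ_e$ with $e\notin D$, and the blue edges of the $\iQ_e$ with $e\in D$, i.e.\ $\bdy\rD=\iR\sd\iQ_D$ --- is exactly the computation the paper itself performs later, in the proofs of Propositions~\ref{bd-pd} and~\ref{gem-pd}. The point you flag as the main obstacle is less delicate than you suggest: at any vertex $\vk$ of $\embg{\rg}$ the surrounding bands occur in the cyclic order $\ir_v,\iq_e,\ib_f,\iq_{e'}$, and since the $0$-band is always included in $\rD$ while the $2$-band never is, the bands of $\rD$ incident with $\vk$ always form a single contiguous sector; hence $\rD$ is a compact surface with boundary whose boundary is an embedded $1$-manifold, and each component passes through each vertex at most once, which gives the cycle claim directly (subject only to the degenerate degree-one configurations already noted in the standard description).
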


 For our purposes the difficulty with this description of partial
duality, or other descriptions involving ribbon graphs and arrow
presentations, is that it does not explicitly describe the faces of the
partial dual.
 The vertices of $G^D$ are explicitly determined by
$\fR_1 \cup \fR_2 \cup \ldots \fR_k$, but the faces are determined only
implicitly.
 For our results on closed $2$-cell embeddings in Section \ref{c2c} we
need an explicit description of both vertices and faces.
 The problem is that ribbon graphs themselves do not explicitly describe
faces, since we discard the $2$-bands.

 We use $S \sd T$ to denote the symmetric difference of two sets,
$(S-T) \cup (T-S)$.

 \begin{proposition}
 \label{bd-pd}
 Given a connected graph embedding $G$ in $\Si$ represented by a band
decomposition $\bd$ of $\Si$, and $D \subseteq E(G)$, we may construct a
band decomposition $\bd^D$ corresponding to $G^D$, the partial dual of
$G$ with respect to $D$, as follows, using our standard decomposition
for $\bd$.

 \smallskip
 \noindent \textnormal{(1)}
 Take the (possibly disconnected) subspace
$\rD$ of $\Si$ that is the union of $\bar{\ir_v}$ for all $v \in V(G)$
and $\bar{\iq_e}$ for all $e \in D$.
 Let the boundary of $\rD$ be $\fR = \fR_1 \cup \fR_2 \cup \ldots \cup \fR_k$,
where each $\fR_i$ is a simple closed curve, representing a cycle in
$\embg{\bd}$.
 %

 \smallskip
 \noindent \textnormal{(2)}
 Let $P = E(G)-D$ and take the (possibly disconnected) subspace
 $\rP$ of $\Si$ that is the union of $\bar{\ir_v}$ for all $v \in V(G)$
and $\bar{\iq_e}$ for all $e \in P$.
 Let the boundary of $\rP$ be $\fB = \fB_1 \cup \fB_2 \cup \ldots \cup
\fB_\ell$, where each $\fB_j$ is a simple closed curve, representing a
cycle in $\embg{\bd}$.

 \smallskip
 \noindent \textnormal{(3)}
 Delete from $\bd$ all $0$-band interiors $\ir_v$ for $v \in V(G)$
and all $2$-band interiors $\ib_f$ for $f \in F(G)$, leaving
 the union of $\embg{\bd}$ and all $\bar{\iq_e}$ for $e \in E(G)$.
 Glue on new $0$-bands $\bar{\fr_i}$ with $\bdy \fr_i = \fR_i$ for $i =
1, 2, \ldots, k$, and new $2$-bands $\bar{\fb_j}$ with $\bdy \fb_j =
\fB_j$ for $j = 1, 2, \ldots, \ell$.
 The result is $\bd^D$.
 \end{proposition}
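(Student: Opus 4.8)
The plan is to treat Proposition~\ref{rg-pd} as the definition of the partial dual: $G^D$ is the embedding whose ribbon graph is $\rg^D = \buni \bar{\fr_i} \cup \bune \bar{\iq_e}$, and its band decomposition is recovered from $\rg^D$ by gluing one $2$-band along each boundary component. It therefore suffices to check two things about the complex $\bd^D$ produced in step~(3): that discarding its $2$-bands returns exactly $\rg^D$, and that the $2$-bands $\bar{\fb_j}$ are attached along precisely the boundary components of $\rg^D$. Both will drop out once I confirm that $\bd^D$ is genuinely a closed surface decomposed into bands in the standard way, with $0$-bands $\bar{\fr_i}$, $1$-bands $\bar{\iq_e}$ and $2$-bands $\bar{\fb_j}$.

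The bookkeeping is organised by the red/blue/yellow colouring of $\embg{\bd}$; I will write \emph{$P$-band} and \emph{$D$-band} for the $1$-bands $\bar{\iq_e}$ with $e \in P$ and $e \in D$. Since $\rD$ comprises every $0$-band together with the $D$-bands, a short check of which side of each edge lies in $\rD$ shows that $\fR = \bdy \rD$ runs along the red edges of the $P$-bands, the blue edges of the $D$-bands, and all yellow edges; dually $\fB = \bdy \rP$ runs along the red edges of the $D$-bands, the blue edges of the $P$-bands, and all yellow edges. As each $\iQ_e$ is made of two red and two blue edges, one reads off that every edge of $\embg{\bd}$ lies on exactly two of the cycles in $\{\iQ_e\} \cup \{\fR_i\} \cup \{\fB_j\}$: a red edge of a $P$-band on its $\iQ_e$ and some $\fR_i$; a red edge of a $D$-band on its $\iQ_e$ and some $\fB_j$; symmetrically for blue edges, with the roles of $\fR$ and $\fB$ interchanged; and each yellow edge on one $\fR_i$ and one $\fB_j$.

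I would then promote this edge count to a genuine surface. Deleting the old $0$- and $2$-band interiors vacates the corners of $\embg{\bd}$ that belonged to $0$- or $2$-bands, while the $1$-band corners are left untouched; using the same colour analysis I would verify that through each vacated corner passes exactly one of the new bands $\bar{\fr_i}$, $\bar{\fb_j}$, so that the cycles $\iQ_e$, $\fR_i$, $\fB_j$ partition the corners of $\embg{\bd}$. These curves are simple and closed by steps~(1) and~(2), and since the local structure of $\embg{\bd}$ at each vertex is unchanged, filling each corner exactly once makes the link of every vertex a single circle; attaching $2$-cells along the curves then yields a closed surface and exhibits $\bd^D$ as a band decomposition. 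Note in passing that, for instance, a blue edge of a $D$-band now borders a $1$-band and a $0$-band, the colour-reassignment one expects under dualisation.

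It remains to match $\bd^D$ with $G^D$. Discarding the $2$-bands $\bar{\fb_j}$ leaves $\buni \bar{\fr_i} \cup \bune \bar{\iq_e}$, with each new $0$-band attached along $\fR_i$ and each $1$-band attached along its red edges (for $e \in P$) or its blue edges (for $e \in D$); this is verbatim the ribbon graph $\rg^D$ of Proposition~\ref{rg-pd}, so $\bd^D$ represents $G^D$. Because $\bd^D$ is a band decomposition, its $2$-bands are automatically glued along the boundary components of its ribbon graph $\rg^D$; as a check, tracing $\bdy \rg^D$ directly gives exactly the red edges of $D$-bands, the blue edges of $P$-bands and the yellow edges, the same edge set as $\fB = \bdy \rP$. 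The step I expect to be the real obstacle is the corner count, since the edge count alone does not force a surface: one must confirm that the new $0$- and $2$-bands close up correctly at the vertices of $\embg{\bd}$, in particular along each yellow edge, which must receive a new $0$-band on one side and a new $2$-band on the other.
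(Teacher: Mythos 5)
Your proposal is correct and follows essentially the same route as the paper: both take Proposition~\ref{rg-pd} as the definition, observe that the $0$- and $1$-bands already agree, and reduce everything to checking $\bdy \rP = \bdy(\rg^D)$ — your colour-by-colour bookkeeping is just an unpacked form of the paper's symmetric-difference computation $\bdy(\rg^D) = \iR \sd (\iQ \sd \iQ_D) = \iR \sd \iQ_P = \bdy\rP$. Your extra check at the corners of $\embg{\bd}$ addresses a point the paper dispatches with the remark that its identities hold ``up to finitely many points,'' so it is a welcome refinement rather than a different argument.
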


 \begin{proof}
 Form a ribbon graph $\rg$ by discarding the $2$-bands of $\bd$.
 We can obtain a band decomposition of $G^D$ by adding $2$-bands along
the boundary of $\rg^D$ from Proposition \ref{rg-pd}.  The $0$-bands and
$1$-bands of $\bd^D$ as constructed here are identical to those for
$\rg^D$ from Proposition \ref{rg-pd}, so we just need to show that the
boundaries of the $2$-bands of $\bd^D$, which form $\bdy \rP$, are the
same as the boundary of $\rg^D$, i.e., that $\bdy \rP = \bdy (\rg^D)$.

 To avoid cumbersome notation, all equations in this paragraph are
correct up to adding or deleting finitely many points (which will be
vertices of $\embg{\bd}$); this is sufficient to prove our conclusion.
 Since
 $\rD = \left( \bunv \bar{\ir_v} \right)
	\cup \left( \bund \bar{\iq_e} \right)$
 we have
 $\bdy \rD =  \left( \bunv \iR_v \right)
	\sd \left( \bund \iQ_e \right)
   = \iR \sd \iQ_D$.
 Similarly, $\bdy \rP = \iR \sd \iQ_P$.
 Now $\rg^D = \left( \buni \bar{\fr_i} \right)
	\cup \left( \bune \bar{\iq_e} \right)$
 so that
 $\bdy (\rg^D) = \left( \buni \fR_i \right)
	\sd \left( \bune \iQ_e \right)
   = \bdy \rD \sd \iQ = (\iR \sd \iQ_D) \sd \iQ = \iR \sd (\iQ \sd \iQ_D)
   = \iR \sd \iQ_P = \bdy \rP$, as required.
 \end{proof}

 As pointed out by one of the referees, the claim that $\bdy \rP =
\bdy(O^D)$ in the above proof can also be proved using basic
properties of partial duality for ribbon graphs established by Chmutov
\cite{Chmu09}.

 When we are forming a dual $G^D$ we refer to the elements of $D$ as
\emph{dual} edges and the elements of $E(G)-D$ as \emph{primal} edges.
 In Proposition \ref{bd-pd}, loosely $\rD$ is the region of the surface
$\Si$ containing all vertices and the dual edges, and its boundary $\fR
= \bdy \rD$ specifies the new vertices.
 Similarly, $\rP$ is the region of the surface containing all vertices
and the primal edges, and its boundary $\fB = \bdy \rP$
specifies the new faces.
 We identify the new vertex associated with $\fR_i$ as $\fv_i$, and the
new face associated with $\fB_j$ as $\ff_j$.

 The construction of Proposition \ref{bd-pd} explicitly describes the
faces (via $2$-bands), as well as the vertices (via $0$-bands), of
$G^D$.  This construction also shows that partial duality treats dual and
primal edges in some sense symmetrically.
  Moreover, the union of the bands not in $\rP$, namely $\bar{\ib_f}$
for all $f \in F(G)$, and $\bar{\iq_e}$ for all $e \in D$, forms a
region $\rD^*$ complementary to $\rP$ (ignoring overlap along the
boundary), so $\bdy \rD^* = \bdy \rP$.
 Therefore, the above description can also be expressed in a way that
treats faces and vertices symmetrically, with $\fR = \bdy \rD$ and $\fB
= \bdy \rD^*$.


 \smallskip
 It is helpful to remove the ambiguity in the definition of band
decomposition as to whether $1$-bands can intersect.  For our main
result in Section \ref{c2c}, we will use
a band decomposition where $1$-bands corresponding to adjacent edges
always intersect.  To help familiarize the reader with this, we use it
as the basis for our example to illustrate Proposition \ref{bd-pd}.

 We first construct the \emph{barycentric subdivision} $\bary$ of a
graph embedding $G$ (see \cite[p. 154]{MT}), which we will use again
later.
 We replace each edge $e$ by a path of length two through a new vertex
$\baryv_e$, then add a new vertex $\baryv_f$ in each face $f$, joined to
all vertices (including new vertices $\baryv_e$) on the boundary of $f$.
 Let $U_0 = V(G)$,
 $U_1 = \{\baryv_e \;|\; e \in E(G)\}$ and 
 $U_2 = \{\baryv_f \;|\; f \in F(G)\}$.
 Then $\bary$ is a triangulation, where each triangle contains one vertex
from each of $U_0$, $U_1$ and $U_2$.
 Let $E_{jk}$, $0 \le j < k \le 2$, be the set of edges between $U_j$
and $U_k$ in $\bary$.
 The \emph{radial graph} $\rad$ is the subgraph of $\bary$ induced by
$U_0 \cup U_2$; its edge set is $E_{02}$.
 Define a \emph{corner} to be the midpoint of an edge of $E_{02}$.
 Every corner $\vk$ is associated with a vertex $\vkvg(\vk)$ of $G$, a
face $\vkfg(\vk)$ of $G$, and with an unordered pair $\vkeg(\vk)$ of
(possibly equal) edges of $G$.  It is also associated with an edge
$\vkeb(\vk)$ of $\bary$.  A corner can be thought of as marking the
place where a vertex, a face and two edges of $G$ come together.

 The \emph{corner graph} $K$ is a graph embedding with the corners of
$G$ as its vertices.
 For edges we add a cycle $\iR_v$ of length $\deg_G(v)$ around each vertex
$v$ of $G$, through the corners of $v$ in their order around $v$, and a
cycle $\iB_f$ of length $\deg_G(f)$ inside each face $f$ of $G$, through
the corners of $f$ in their order around $f$.
 Then $K$ has three types of face:
 each $v \in V(G)$ is in a face $\ir_v$ with $\bdy \ir_v = \iR_v$,
 each $f \in F(G)$ contains a face $\ib_f$ with $\bdy \ib_f = \iB_f$,
 and each $e \in E(G)$ with endvertices $u$ and $v$ intersects a unique
face $\iq_e$ of degree $4$ in addition to $\ir_u \cup \ir_v$.
 Each vertex of $K$ has degree $4$, $\iR = \bunv \iR_v$ and $\iB = \bunf \iB_f$
are edge-disjoint $2$-factors of $K$ with $\iR \cup \iB = K$, and $\{\iQ_e
\;|\; e \in E(G)\}$ is a decomposition of $K$ into edge-disjoint closed
trails of length $4$ (so that $\iQ = \bune \iQ_e = K$).  The closures of the
faces of $K$ give a band decomposition of $G$.

 Each edge $\ek$ of $K$ is associated with a unique edge $\ekeg(\ek)=e$
of $G$ by $\ek \in E(\iQ_e)$, so $\ek$ either parallels or crosses $e$.
 We can also define a function $\ekcr(\ek)$ for $\ek \in E(K)$, which
takes the value $1$ if $\ek$ crosses $\ekeg(\ek)$ in $\Si$, and $0$
when $\ek$ parallels $\ekeg(\ek)$.
 In other words, $\ekcr(\ek) = 1$ if $\ek \in E(\iR)$, and $0$ if $\ek
\in E(\iB)$.

 If we dualize a set of edges $D$ in $G$, then from Proposition
\ref{bd-pd}, the corner graph $K^D$ corresponding to the partial dual
$G^D$ has the same underlying graph as $K$.  We also keep the same
$1$-bands, but get new $0$-bands and $2$-bands using Proposition
\ref{bd-pd}.

 \begin{figure}[tb]
 \centering \scalebox{1}{\includegraphics{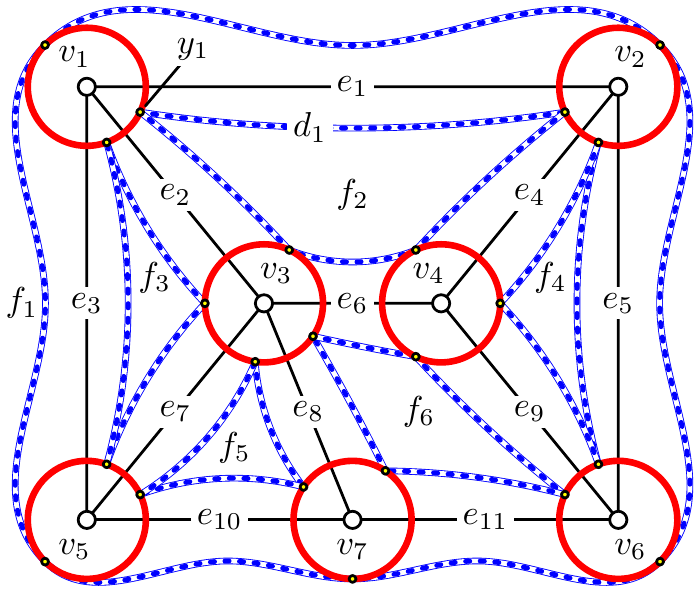}}
 \caption{Graph embedding and its corner graph\label{cnr-orig}}
 \end{figure}

 We illustrate this using Figure \ref{cnr-orig}, where we see a plane
graph $G$ (thin lines) and its corner graph $K$, with edges of $\iR$
shown in red (solid) and edges of $\iB$ shown in blue (dashed).
 Each face of $K$ is labelled by a vertex, edge or face of $G$.
 For the labelled vertex $\vk_1$ of $K$, we have $\vkvg(\vk_1) = v_1$,
$\vkfg(\vk_1) = f_2$, and $\vkeg(\vk_1) = \{e_1, e_2\}$.
 For the labelled edge $\ek_1$ of $K$, we have $\ekeg(\ek_1) = e_1$, and
$\ekcr(\ek_1) = 0$, since $\ek_1$ does not cross $e_1$.

 Now suppose we form $G^D$ where $D = \{e_2, e_3, e_4, e_7, e_{10}\}$;
edges of $D$ are shown as wavy edges in Figure \ref{cnr-pd}.  We now
change the colours of the edges of $K$ corresponding to $D$ to describe
the new vertices and faces of $K^D$.  The boundary components of $\bdy
\rD$, providing new vertices, are shown in red (solid); these form $\fR
= \fR_1 \cup \fR_2 \cup \fR_3 \cup \fR_4$, enclosing regions containing
the edges of $D$.
 The boundary components of $\bdy \rP$, providing new faces, are shown
in blue (dashed); these form $\fB = \fB_1 \cup \fB_2 \cup \fB_3$,
enclosing regions containing the edges of $P = \{e_1, e_5, e_6, e_8,
e_9, e_{11}\}$.

 \begin{figure}[tb]
 \centering \scalebox{1}{\includegraphics{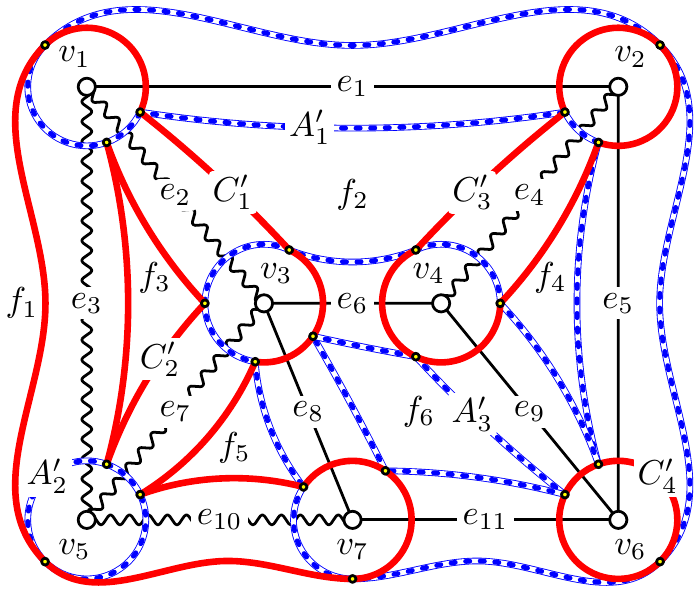}}
 \caption{Dualized corner graph\label{cnr-pd}}
 \end{figure}

 The edges incident with a new vertex or face, and their order, are
determined by applying $\ekeg$ to the edges of the appropriate cycle
$\fR_i$ or $\fB_j$.  For example, new vertex $\fv_3$ corresponds to
$\fR_3$, and tracing around $\fR_3$ clockwise we see that we parallel or
cross edges $e_1$, $e_5$, $e_4$, $e_9$, $e_6$ and $e_4$ again.
 That is the order of edges around $\fv_3$.
 The new face $\ff_2$ corresponds to $\fB_2$, and tracing around that we
see that we cross edges $e_3$, $e_7$ and $e_{10}$.   That is the order
of edges around $\ff_2$.
 Chmutov \cite{Chmu09} showed that a partial dual of an orientable
embedding is always orientable.  So knowing the rotation around each
vertex, we can construct the partial dual embedding.
 In the nonorientable case, we must be more careful; we need to know how
the edges of a cycle $\iQ_e$, corresponding to edges, are traversed by
the corresponding new vertices, so we can decide whether the
corresponding edge in the dual is twisted (signature $-1$) or not.  That
is the information carried by the arrow presentation in Chmutov's
original formulation of partial duality; in Propositions \ref{rg-pd} and
\ref{bd-pd} it is carried by the faces $\bar{\iq_e}$, which we never
delete.



 \smallskip
 We also consider a band decomposition where all $1$-bands are pairwise
disjoint.  We again base it on the barycentric subdivision $\bary$,
using the notation developed above.
 Since $\bary$ is a triangulation, its dual $\baryd$ is a cubic graph
embedding where each vertex is incident with exactly one edge from each
of $E_{01}^*$, $E_{02}^*$ and $E_{12}^*$, where $E_{jk}^*$ is the set of
dual edges for $E_{jk}$ (here we distinguish between edges and their
duals).
 If $U_j^*$ denotes the set of faces of $\baryd$ dual to the vertices in
$U_j$, then the closure of each element of $U_j^*$ is a $j$-band in a band
decomposition of $G$ in which $1$-bands are pairwise disjoint, with
 $\iR = \bigcup_{\ir \in U_0^*} \bdy \ir$,
 $\iQ = \bigcup_{\iq \in U_1^*} \bdy \iq$ and
 $\iB = \bigcup_{\ib \in U_2^*} \bdy \ib$.

 Colour the edges of $\baryd$ so that edges of $E_{01}^*$ are red, of
$E_{02}^*$ are yellow, and of $E_{12}^*$ are blue.
 (Our choice of colours here follows \cite{BL} and agrees with that in
our standard description of a band decomposition, earlier.)
 This is a proper $3$-edge-colouring.
 In an edge-coloured graph, we define a \emph{bigon} to be a cycle whose
edges alternate between two colours.
 Each face $\ir_v \in U_0^*$ is bounded by a red-yellow bigon; each
$\iq_e \in U_1^*$ is bounded by a red-blue bigon; and each $\ib_f \in
U_2^*$ is bounded by a blue-yellow bigon.
 If we take the underlying graph $\ug{\baryd}$ and keep the edge
colours, we have the \emph{graph-encoded map} or \emph{gem}
representation $M$ of $G$.
 We can clearly recover the graph embedding $\baryd$, and hence $G$, by
gluing a disk onto each bigon of $M$.

 Although we obtained $M$ from an embedding, it is just an abstract
connected, properly $3$-edge-coloured, cubic graph.  The red-yellow
bigons of $M$ correspond to vertices of $G$, the red-blue bigons
correspond to edges of $G$ and are all $4$-cycles, and the
blue-yellow bigons correspond to faces of $G$.  Note that $M$ may have
parallel edges in red-yellow or blue-yellow pairs, which correspond to
vertices and faces of degree one, respectively, but $M$ has no parallel
edges in other colour combinations (red-blue, or two of the same
colour), and $M$ has no loops.

 Gems are a purely combinatorial representation of an embedding.
 Any properly $3$-edge-coloured cubic graph $M$ with colours red, yellow
and blue in which the red-blue bigons are $4$-cycles is the gem of some
embedding.
 The idea of representing an embedding by a $3$-edge-coloured cubic
graph appears to be due to Robertson \cite[Section 4]{Robe71}.
 It was developed (appropriately for us) as part of a general theory of
graph embedding duality by Lins \cite{Lins82}, and used as the basis of
a general theory of graph embeddings by Bonnington and Little \cite{BL}.
 It has been extended to more general $2$-dimensional embeddings, and to
representations of higher dimensional objects; see \cite{ChVTpre}.

 We now show that partial duality can be expressed very simply in terms
of gems.  This is a result we have presented in the past (see for
example \cite{Elli12}) but this is the first proof we have given in
print.
 Recently Chmutov and Vignes-Tourneret developed a theory of partial
duality for hypermaps, which generalize graph embeddings.  They have a
generalization of Proposition \ref{gem-pd} in that context
\cite[Theorem 3.11]{ChVTpre}.

 \begin{proposition}
 \label{gem-pd}
 Suppose $G$ is a $2$-cell embedding of a connected graph $G$, with gem
$M$.  Let $D \subseteq E(G)$.  Then the partial dual $G^D$ corresponds
to a gem $M^D$ obtained by swapping the colours red and blue on each
bigon of $M$ corresponding to an edge in $D$.
 \end{proposition}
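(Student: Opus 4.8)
The plan is to carry out the whole argument inside the band decomposition $\bd$ with pairwise disjoint $1$-bands, whose boundary graph is $\embg{\bd}=\baryd$ and whose underlying coloured graph is the gem $M$, and then to compare the bigons of the recoloured gem $M^D$ with the bands produced by Proposition \ref{bd-pd}. I will use throughout the dictionary established above: a vertex $v$ of $G$ is the red--yellow bigon $\iR_v$, a face $f$ is the blue--yellow bigon $\iB_f$, an edge $e$ is the red--blue bigon $\iQ_e$ (a $4$-cycle), and, writing $\iR=\bunv\iR_v$ and $\iB=\bunf\iB_f$ as before, $\iR$ and $\iB$ are the unions of the red--yellow and blue--yellow bigon boundaries.

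First I would check that $M^D$ really is a gem. Because the red and blue edges of $M$ form a $2$-regular subgraph whose components are exactly the red--blue bigons $\iQ_e$, each vertex of $M$ lies on a unique such bigon; hence the swaps prescribed for the edges of $D$ act on disjoint edge sets, and at each affected vertex the operation simply interchanges the incident red and blue edges while fixing the yellow one. Thus $M^D$ is again cubic and properly $3$-edge-coloured, and since interchanging the two colours on a red--blue bigon yields a red--blue bigon, the red--blue bigons of $M^D$ are exactly the $4$-cycles $\iQ_e$. So $M^D$ is the gem of some embedding $H$ whose edges, recorded by these unchanged $\iQ_e$, are canonically the edges of $G$.

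The core of the proof is to identify $H$ with $G^D$ by matching the remaining bigons of $M^D$ to the new bands of Proposition \ref{bd-pd}: I claim the red--yellow bigons of $M^D$ are the components $\fR_i$ of $\bdy\rD$ (the new $0$-bands) and its blue--yellow bigons are the components $\fB_j$ of $\bdy\rP$ (the new $2$-bands). At the level of edge sets this is read off directly from the computations in the proof of Proposition \ref{bd-pd}. A red edge of $M$ survives in $\bdy\rD=\iR\sd\iQ_D$ exactly when its $1$-band is primal, a blue edge survives exactly when its $1$-band is dual, and every yellow edge survives; this is precisely the set of red and yellow edges of $M^D$. The symmetric computation $\bdy\rP=\iB\sd\iQ_D$ matches the blue and yellow edges of $M^D$.

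The step I expect to be the real obstacle is upgrading this equality of edge sets to an equality of cycle decompositions, since $\bdy\rD$ and the red--yellow bigons are each disjoint unions of cycles and Proposition \ref{bd-pd} pins down $\bdy\rD$ only up to finitely many (vertex) points. I would resolve this by a purely local check at each vertex $w$ of $M$, with incident red, yellow, blue edges $r_w,y_w,b_w$ and lying on the unique $\iQ_e$. If $e\in P$ then $w$ is unrecoloured and both the red--yellow bigon of $M^D$ and $\bdy\rD$ leave $w$ along $r_w,y_w$; if $e\in D$ then $r_w,b_w$ are interchanged and both leave $w$ along $b_w,y_w$. Since the two $1$-manifolds pass through every gem vertex via the same pair of edges, they have identical components, and the analogous local check matches the blue--yellow bigons with $\bdy\rP$. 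Together with the identification of edges this shows $M^D$ is the gem of the band decomposition $\bd^D$, i.e.\ of $G^D$, as required.
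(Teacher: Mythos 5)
Your proof is correct and follows essentially the same route as the paper's: both identify the edge sets of the new vertex and face bigons via the computations $\bdy\rD = \iR \sd \iQ_D$ and $\bdy\rP = \iR \sd \iQ_P$ from Proposition \ref{bd-pd} and read off that this is exactly a red--blue swap on the bigons $\iQ_e$ with $e \in D$. The only difference is that you make explicit two points the paper leaves implicit (that the recoloured graph is still a gem, and the vertex-by-vertex check upgrading equality of edge sets to equality of cycle decompositions), which is a reasonable addition but not a different argument.
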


 \begin{proof}
 Everything we write here involves sets of edges, but for brevity we
just write $H$ instead of $E(H)$, for any subgraph $H$ of $M$.

 Let $R = \iR \cap \iQ$, $Y = \iR \cap \iB$ and $B = \iB \cap \iQ$ be
the sets of red, yellow and blue edges in $M$, respectively.
 Let $R_D = R \cap Q_D$, $R_P = R \cap Q_P$, $B_D = B \cap Q_D$ and $B_P
= B \cap Q_P$.  Then $R_D$, $R_P$, $B_D$, $B_P$ and $Y$ are disjoint and
their union is $E(M)$.
 We have $\iR = R_D \cup R_P \cup Y$, $\iB = B_D \cup B_P \cup Y$, and
$\iQ = R_D \cup R_P \cup B_D \cup B_P$.

 Suppose we take the partial dual; we use results and notation from
Proposition \ref{bd-pd} and its proof.
 We have $\fR = \iR \sd \iQ_D = (R_D \cup R_P \cup Y) \sd (R_D \cup B_D)
= R_P \cup B_D \cup Y$.
 Similarly, $\fB = \iR \sd \iQ_P = R_D \cup B_P \cup Y$.
 And $\fQ = \iQ = R_D \cup R_P \cup B_D \cup B_P$.
 Therefore, for $M^D$, $\final{R} = \fR \cap \fQ = R_P \cup B_D$,
$\final{Y} = \fR \cap \fB = Y$, and $\final{B} = \fB \cap \fQ = R_D \cap
B_P$.  In other words, we exchange red and blue on edges in $Q_D$, and
leave other colours alone, as required.
 \end{proof}

 The description of partial duality in Proposition \ref{gem-pd} treats
red and blue edges, and hence red-yellow and blue-yellow bigons,
symmetrically, again illustrating that partial duality treats vertices
and faces symmetrically.

 We provide an example in Figures \ref{gem-pic} and \ref{gem-vf}.  At
left in Figure \ref{gem-pic} we see a small planar graph and its gem
 (with colours indicated as red = solid, blue = dashed, and yellow =
hollow). Suppose we dualize $D = \{e_1, e_5\}$.  At right in Figure
\ref{gem-pic} we see the original graph, with the dual edges shown as
wavy edges, and the new gem.  In Figure \ref{gem-vf} we have separated
out the red-yellow and blue-yellow bigons in the partial dual, and we
see that there are now three vertices and only one face.

 \begin{figure}[tb]
  \lline{%
	\hfill
	\scalebox{1}{\includegraphics{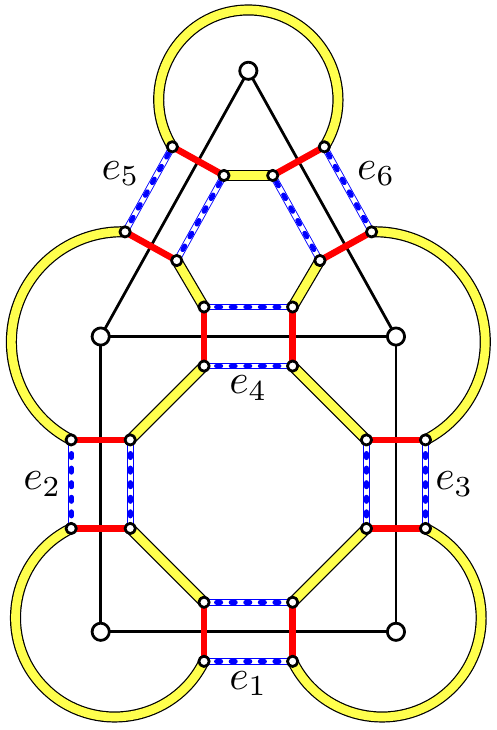}}%
	\kern2cm
	\scalebox{1}{\includegraphics{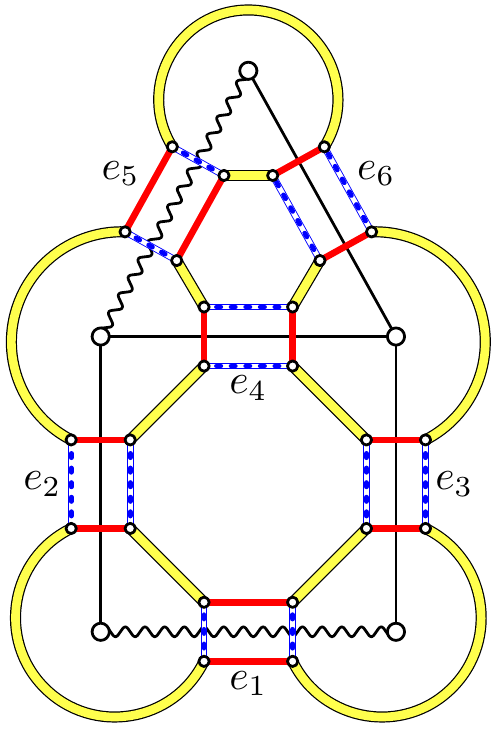}}%
	\hfill
  }
 \caption{Left: graph and gem; right: after partial dual\label{gem-pic}}
 \end{figure}

 \begin{figure}[tb]
  \lline{%
	\hfill
	\scalebox{1}{\includegraphics{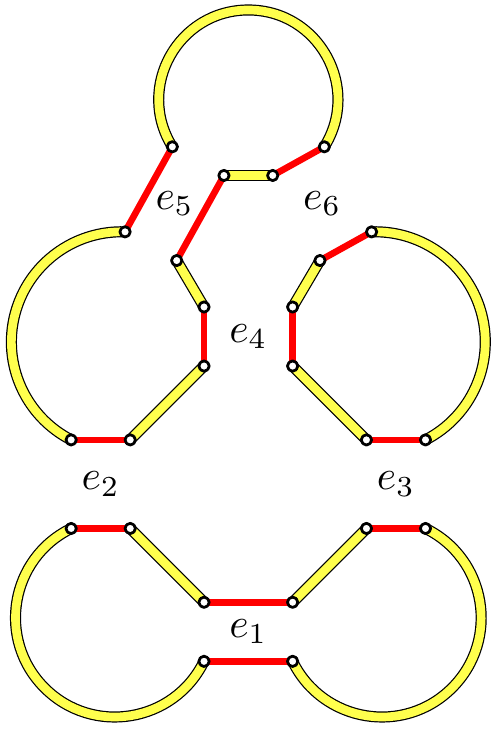}}%
	\kern2cm
	\scalebox{1}{\includegraphics{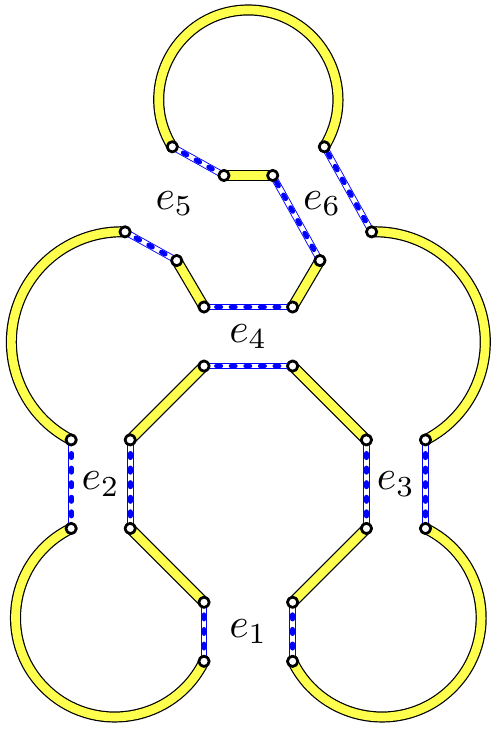}}%
	\hfill
  }
 \caption{Left: new red-yellow bigons (vertices); right: new blue-yellow
	bigons (faces)\label{gem-vf}}
 \end{figure}


 In the figures, the partial duality operations for gems and for the
corner graph seem to be very similar, which is to be expected.
 The corner graph $K$ can be obtained from the coloured dual of the
barycentric subdivision, which is also the embedded gem, by contracting
yellow edges, which become the vertices of $K$.  Then the red edges of
$M$ become $\iR$ in $K$, and the blue edges of $M$ become $\iB$ in $K$.

 A number of properties of partial duals are very easy to handle using
gems.  For example, as mentioned earlier, Chmutov showed that the
orientability of an embedding is unaltered by taking a partial dual. 
This follows from the fact that an embedding is orientable if and only
if its gem is bipartite \cite[pp.~180--181]{Lins82}: this is a fixed
property of the underlying graph of the gem, and is not altered by
swapping colours on red-blue bigons.
 Also, Ellis-Monaghan and Moffatt \cite{EM12, EM} have extensively
investigated the interaction between partial duality and partial Petrie
duality; they call the combined effect \emph{twisted duality}.
 This can be handled using an extension to gems suggested by Lins
\cite{Lins82}.  
 He added a fourth set of edges to a gem, which we call green edges,
adding two diagonals to each red-blue bigon so it becomes a
red-blue-green \emph{trigon}, isomorphic to $K_4$.  Bonnington and
Little \cite{BL} call the result a \emph{jewel}; we show a jewel in
Figure \ref{jewel} (with green indicated by heavy hollow edges).
 Petrie duality is formulated very naturally in the jewel, as swapping
blue and green in red-blue-green trigons: this can obviously be done for
a subset of the edges as well as for the whole edge set.  Therefore,
some aspects of twisted duality are easy to interpret using jewels: for
example, twisted duality provides an action of the \emph{ribbon group}
$S_3^E$ on embeddings with a fixed edge set $E$ \cite[Definition
3.4]{EM12} because we can permute the colours red, blue and green for
each $e \in E$.

 \begin{figure}[tb]
  \lline{%
	\hfill
	\scalebox{1}{\includegraphics{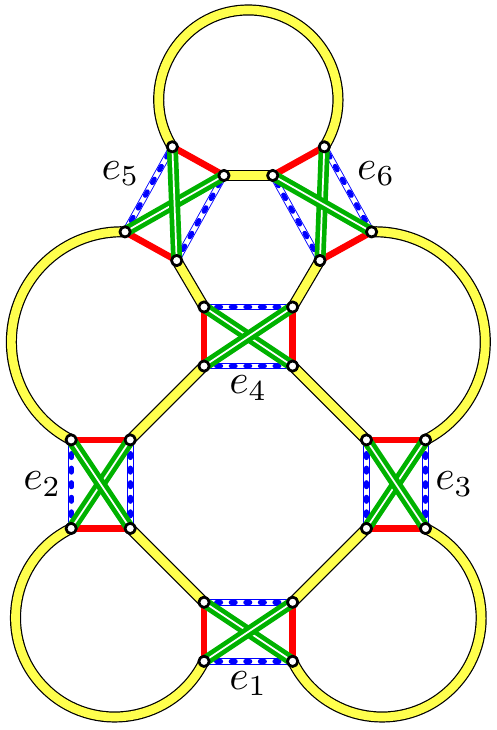}}%
	\hfill
  }
 \caption{The jewel of our earlier example\label{jewel}}
 \end{figure}

 %

 \section{Closed 2-cell embeddings}\label{c2c}

 Closed $2$-cell embeddings are particularly nice because each face and
vertex are incident at most once.
 They can be regarded as embeddings of nonseparable graphs with
facewidth at least $2$.
 The \emph{Strong Embedding Conjecture} \cite{Hagg77} says that
every $2$-connected graph has a closed $2$-cell embedding in some
surface.
 This implies the well-known \emph{Cycle Double Cover Conjecture}
\cite{Seym79,Szek73}, which says that every $2$-edge-connected  graph
has a \emph{cycle double cover}, i.e., a set of cycles in the graph such
that each edge is contained in exactly two of these cycles.


 Being closed $2$-cell seems to be a property that is quite fragile
under taking partial duals.  If we start with a closed $2$-cell graph
embedding, taking the partial dual of any single edge (necessarily a
\emph{link}, or non-loop) creates a loop, and a graph embedding with a
loop and at least one other edge is not closed $2$-cell.  (See
\cite[Subsection 1.7]{Chmu09} or \cite[pp.~32-33]{EM}.  A more
general version of this situation is discussed following Lemma
\ref{global}.)
 However, it is known that the (full geometric) dual of a closed $2$-cell
embedding is always closed $2$-cell.  Therefore, we may wonder whether a
closed $2$-cell embedding has any other closed $2$-cell partial duals
besides its full dual.

 In this section we develop some theory that will allow us to provide
examples of closed $2$-cell embeddings that have nontrivial partial
duals that are closed $2$-cell, and to identify certain situations where the
partial dual will not be closed $2$-cell.
 To do this we provide a necessary and sufficient condition based on the
corner graph for an embedding to be closed $2$-cell.  Some parts of our
condition are unfortunately not easy to check exactly, but stronger
versions of these parts are easy to check, so we can obtain useful
results.
 We also use an argument based on gems to say that some graph embeddings
have no partial duals that are closed $2$-cell.  

 What prevents an embedding from being closed $2$-cell is a \emph{bad
vertex/face pair} $(v,f)$: a vertex $v$ and face $f$ so that $v$ and $f$
are incident more than once.  A \emph{good vertex/face pair} $(v,f)$ is
one where $v$ and $f$ are incident at most once.  The definition of a
bad vertex/face pair is symmetric in faces and vertices, which is why
the full dual of a closed $2$-cell embedding is closed $2$-cell.
 Vertex/face incidences occur precisely at the corners of $G$, so
 these conditions are easy to express using the corner graph $K$: a bad
vertex/face pair occurs if there are some $\iR_v$ and $\iB_f$ that
intersect in two or more vertices of $K$.
 It is also easy to express in terms of 
 the gem: a bad vertex/face pair occurs if
there are a red-yellow bigon and a blue-yellow bigon that share two or
more yellow edges. 

 Therefore, for any particular $G$ and $D$, we can use this corner
graph condition or gem condition, along with Proposition \ref{bd-pd} or
\ref{gem-pd}, to easily determine whether $G^D$ is closed $2$-cell. 
However, we would like to provide some general conclusions.  We do this
by analyzing Proposition \ref{bd-pd} in more detail for corner graphs.

 Suppose we have a graph embedding $G$ and its corner graph $K$ together
in the same surface $\Si$.  We use the notation of the previous section
for band decompositions, including our standard description.  These now
always refer to the corner graph $K$.
Let $D \subseteq E(G)$ and $P = E(G)-D$ be our set of dual and primal
edges, respectively.

 We now classify elements of $G$ and of $K$ based on the division
between primal and dual edges.
 A vertex or face of $G$ is said to be \emph{primal-pure} if all of its
incident edges belong to $P$, and \emph{dual-pure} if all of its
incident edges belong to $D$.  In either case we say it is \emph{pure}. 
 If at least one incident edge belongs to $P$ and at least one to $D$,
then we say a vertex or face of $G$ is \emph{mixed}.
 Recall that vertices of $K$ are corners of $G$, and each $\vk \in V(K)$
has an associated $v = \vkvg(\vk) \in V(G)$, $f = \vkfg(\vk) \in F(G)$,
and $\{e_1,e_2\} = \vkeg(\vk)$ which is an unordered pair of (possibly
equal) edges of $G$.
 We say $\vk$ is \emph{$k$-primal-pure} (\emph{$k$-dual-pure}) if $e_1,
e_2 \in P$ ($e_1, e_2 \in D$) and exactly $k$ of $v$ or $f$ are pure (so
$k = 0$, $1$ or $2$).  We can drop either the $k$ or the primal/dual
distinction, and talk about \emph{$k$-pure} corners, \emph{primal-pure}
or \emph{dual-pure} corners, or just \emph{pure} corners.  A corner that
is not pure is \emph{mixed}, which necessarily means that $e_1 \in D$
and $e_2 \in P$, or vice versa, and that both $v$ and $f$ are mixed.

 We can map every walk $W$ in $K$ to a walk $\wkwg(W)$ in $G$.
 Loosely, we project each edge of $W$ that parallels an edge of $G$
to that edge.  We ignore edges of $W$ that cross edges of $G$; such
edges stay at the same vertex of $G$.
 More formally, suppose $W$ is represented as an alternating sequence of
vertices and edges, $\vk_0, \ek_1, \vk_1, \ek_2, \vk_2, \ldots, \vk_m$
in $K$.  We map this sequence to a walk in $G$ as follows:
 (1) for each $j$, replace vertex $\vk_j$ by $\vkvg(\vk_j)$;
 (2) for each $j$, replace edge $\ek_j$ by $\ekeg(\ek_j)$ if
$\ekcr(\ek_i) = 0$, or delete $\ek_i$ if $\ekcr(\ek_i) = 1$;
 (3) now any consecutive subsequences containing only vertices will all
be repetitions of the same vertex, so replace them by one copy of that
vertex.

 From Proposition \ref{bd-pd} we know that the vertices of $G^D$
correspond to components of $\fR = \bdy \rD$.
 We can apply $\wkwg$ to the cycles $\fR_i$.  The
$\wkwg(\fR_i)$ are exactly the components of boundary walks of faces of
the (possibly non-$2$-cell, possibly disconnected) graph embedding
obtained from $G$ by deleting edges not in $D$.  We keep all vertices of
$G$ even if they have no incident edges from $D$; some of the
$\wkwg(\fR_i)$ may be trivial walks consisting of a single vertex.

 In $G$, let $T_D$ be the subspace of $\Si$ which is the union of $D$
and all dual-pure faces, including all mixed or dual-pure vertices, but
no primal-pure vertices.  We call $T_D$ the \emph{totally dual subspace}
of $\Si$, and its components are \emph{totally dual regions}.
 Similarly, we let $T_P$, the \emph{totally primal subspace}, be the
union of $P$ and all primal-pure faces; its components are \emph{totally
primal regions}.  We note that $T_D \cap T_P$ is precisely the set of
mixed vertices, and that $\Si$ is the disjoint union of $T_D \cup T_P$
and the mixed faces.  The boundary of every mixed face is therefore
contained in $\bdy T_P \cup \bdy T_D$.

 Now each vertex $\fv_i$ of $G^D$, can be classified as one of three
types, depending on where its corresponding cycle $\fR_i$ came from.
 First, suppose $\fR_i = \iR_v$ for some $v \in V(G)$.
 In this case $\wkwg(\fR_i)$ will be a trivial walk $\fv_i$.
 Then $\ir_v$ has no neighbouring faces $\iq_e$ with $e \in D$.  In
other words, there are no edges of $D$ incident with $v$ in $G$, so $v$
is primal-pure.
 We write $\fv_i \in \VPPV$.
 Essentially $v$ and $\fv_i$ are the same vertex, unchanged, and we
write $\fv_i = v$.
 Second, suppose $\fR_i = \iB_f$ for some $f \in F(G)$.
 In this case $\wkwg(\fR_i) = \bdy f$.
 Then all $\iq_e$ adjacent to $\ib_f$ in $K$ must have had $e \in D$. 
In other words, every edge of $f$ in $G$ belongs to $D$, so $f$ is
dual-pure.
 We write $\fv_i \in \VDPF$.
 Essentially the face $f$ has been completely dualized to become the
vertex $\fv_i$ in $G^D$, and we write $\fv_i = f$.
 To discuss the third type, suppose we expand the region $\rD$ to
$\rD^+$ by adding the bands $\bar{\iB_f}$ for all dual-pure faces $f$.
 The components of $\bdy \rD^+$ are exactly the components of $\bdy \rD$
that do not come from dual-pure faces; in other words, they come from
primal-pure vertices, or are of our third type.
 But $\rD^+$ consists of all bands of $K$ that intersect $T_D$, together
with bands $\bar{\ir_v}$ for primal-pure vertices $v = \fv_i$. 
 Therefore, for our third type, $\fR_i$ is a boundary cycle for a
component of $\rD^+$ containing a component $\Phi$ of $T_D$, i.e., a
totally dual region.
 %
 %
 The projected walk $\wkwg(\fR_i)$ in $G$ is one of the closed walks
bounding the region $\Phi$ from the outside (`from the outside' means
there is a slight homotopic shifting of $\wkwg(\fR_i)$ that does not
intersect $\Phi$).
 We write $\fv_i \in \VTDR$.

 We can apply a similar analysis to faces $\ff_j$ of $G^D$: $\ff_j \in
\FPPF$ if it comes from a primal-pure face in $G$; $\ff_j \in \FDPV$ if
it comes from a dual-pure vertex in $G$; $\ff_j \in \FTPR$ if it comes
from a boundary walk of a totally primal region of $G$.

 We illustrate these concepts using Figure \ref{cnr-pd}.
 Cycles $\fR_1$ and $\fR_3$ come from boundary walks of totally dual
regions, so $\fv_1, \fv_3 \in \VTDR$.
 Cycle $\fR_2$ is the same as $\iB_3$, so $\fv_2 = f_3 \in \VDPF$. 
 Cycle $\fR_4$ is the same as $\iR_6$, so $\fv_4 = v_6 \in \VPPV$.
 Cycle $\fB_1$ comes from a boundary walk of a totally primal region, so
$\ff_1 \in \FTPR$.
 Cycle $\fB_2$ is the same as $\iR_5$, so $\ff_2 = v_5 \in \FDPV$. 
 Cycle $\fB_3$ is the same as $\iB_6$, so $\ff_3 = f_6 \in \FPPF$.

 \smallskip
 To eliminate bad vertex/face pairs, therefore, we consider all possible
combinations of the three vertex types and three face types in $G^D$.
 Table \ref{lmg} indicates the way in which we will deal with the nine
resulting cases.
 Here `X' indicates situations that never happen.  If $\fv_i \in \VPPV$
and $\ff_j \in \FDPV$ then $\fR_i$ and $\fB_j$ are both cycles of the
form $\iR_v$ in $K$, and hence are disjoint, so the corresponding vertex
and face of $G^D$ are not incident at all, and cannot form a bad pair. 
Similar reasoning applies when $\fv_i \in \VDPF$ and $\ff_j \in \FPPF$.
 The remaining situations are handled by three conditions that we call
the \emph{local condition}, the \emph{midrange condition} and the
\emph{global condition}, represented in the table by LC, MC and GC,
respectively.

 \begin{table}[tb]
 \centering
 \begin{tabular}{l|r||r|r|r|r}
       \multicolumn{5}{r}{face type\kern25pt}	 \\[2pt]
 \cline{2-5}
       &       & dpv & ppf & tpb \\
 \cline{2-5}
 \noalign{\vskip 2pt}
 \cline{2-5}
\vrule height13pt depth0pt width0pt 
vertex &  ppv & X & LC & MC \\
type   &  dpf & LC & X & MC \\
       &  tdb & MC & MC & GC \\
 \cline{2-5}
 \end{tabular}
 \smallskip
 \caption{Handling bad vertex/face pairs\label{lmg}}
 \end{table}


 The local condition is so called because it deals with fixing local
problems, namely individual bad pairs in $G$.  If there is a bad pair in
$G$, we must make sure it does not yield a bad pair in $G^D$.
 In a graph or graph embedding $G$, we denote by $\cobdy_G v$, or just
$\cobdy v$, the set of edges incident with vertex $v$ (including
loops at $v$).

 \begin{lemma}[Local condition]
 \label{local}
 The following are equivalent:

 \smallskip\noindent
 \textnormal{(i)}
 For every bad vertex/face pair $(v,f)$ in $G$, at least one of $v$
or $f$ is mixed.

 \smallskip\noindent
 \textnormal{(ii)}
 For every bad vertex/face pair $(v,f)$ in $G$, $\cobdy v \cup
\bdy f$ contains both primal and dual edges.

 \smallskip\noindent
 \textnormal{(iii)}
 $G^D$ has no bad vertex/face pair $(\fv_i, \ff_j)$ with $\fv_i
\in \VPPV$ and $\ff_j \in \FPPF$, or $\fv_i \in \VDPF$ and $\ff_j \in
\FDPV$.
 \end{lemma}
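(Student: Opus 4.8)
The plan is to prove the cyclic chain of implications (i) $\implies$ (ii) $\implies$ (iii) $\implies$ (i). The one fact that drives all three steps is a simple observation about corners, which I would isolate first. If a vertex $v$ and a face $f$ of $G$ are incident, then they share at least one corner $\vk$, and the two (possibly equal) edges $\{e_1,e_2\} = \vkeg(\vk)$ lying at that corner are both incident with $v$ and both lie on $\bdy f$; hence $\{e_1,e_2\} \subseteq \cobdy v \cap \bdy f$, so in particular $\cobdy v \cap \bdy f \neq \emptyset$. Since $P$ and $D$ are disjoint, this rules out an incident pure pair of opposite type: if $v$ is primal-pure then $e_1 \in P$, so $f$ cannot be dual-pure, and a pure $f$ must then be primal-pure as well (and symmetrically with the roles of $P$ and $D$ reversed). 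Thus a bad --- hence incident --- pair $(v,f)$ in which both $v$ and $f$ are pure must have them of the \emph{same} type, either both primal-pure or both dual-pure.

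For (i) $\implies$ (ii) no topology is needed: given a bad pair $(v,f)$, statement (i) provides a mixed member, and a mixed vertex already puts both a primal and a dual edge into $\cobdy v$, while a mixed face does the same for $\bdy f$; either way $\cobdy v \cup \bdy f$ contains edges of both kinds.

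The substantive step is (ii) $\implies$ (iii), which is really a translation between bad pairs of $G^D$ and bad pairs of $G$. I would invoke the type classification established earlier in this section: a vertex $\fv_i \in \VPPV$ comes from a primal-pure vertex $v$ with $\fR_i = \iR_v$, a face $\ff_j \in \FPPF$ comes from a primal-pure face $f$ with $\fB_j = \iB_f$, and symmetrically $\fv_i \in \VDPF$, $\ff_j \in \FDPV$ come from a dual-pure face $f$ and a dual-pure vertex $v$ with $\fR_i = \iB_f$ and $\fB_j = \iR_v$. Because $K^D$ has the same underlying graph as $K$, the cycles $\iR_v$ and $\iB_f$ are unchanged as vertex sets, so the corner-graph criterion for a bad pair of $G^D$ --- that $\fR_i$ and $\fB_j$ meet in two or more vertices --- becomes, in the first case, exactly the criterion that $(v,f)$ is a bad pair of $G$ with $v$ and $f$ both primal-pure. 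For such a pair $\cobdy v \cup \bdy f \subseteq P$, contradicting (ii); the $\VDPF$/$\FDPV$ case similarly yields a both-dual-pure bad pair with $\cobdy v \cup \bdy f \subseteq D$, again contradicting (ii). Hence neither forbidden configuration occurs and (iii) holds.

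For (iii) $\implies$ (i) the corner fact does the work. Let $(v,f)$ be a bad pair of $G$ and suppose, for contradiction, that neither $v$ nor $f$ is mixed. Both are then pure, and by the corner fact they are of the same type. Running the correspondence of the previous paragraph in reverse, a both-primal-pure bad pair of $G$ produces a bad pair $(\fv_i,\ff_j)$ of $G^D$ with $\fv_i \in \VPPV$ and $\ff_j \in \FPPF$, while a both-dual-pure one produces a bad pair with $\fv_i \in \VDPF$ and $\ff_j \in \FDPV$; either possibility is forbidden by (iii). This contradiction shows at least one of $v,f$ is mixed, which is (i). I expect the only delicate point to be the bookkeeping in the translation step: one must keep straight that passing to $K^D$ leaves the point-sets $\iR_v$ and $\iB_f$ intact (only their colours change), so that ``meeting in two or more vertices'' has the same meaning in $K$ and $K^D$, and that the classification bijections are applied in the correct direction in each of (ii) $\implies$ (iii) and (iii) $\implies$ (i). Everything else reduces to the disjointness of $P$ and $D$ together with the corner fact.
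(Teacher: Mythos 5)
Your proposal is correct and uses essentially the same ingredients as the paper's proof: the observation that an incident pure pair must be of the same (primal or dual) type, and the corner-graph correspondence between bad pairs $(\fv_i,\ff_j)$ of the two forbidden types in $G^D$ and bad pairs $(v,f)$ of $G$ with both members pure of the same type. The only difference is organizational --- you close a cyclic chain (i) $\implies$ (ii) $\implies$ (iii) $\implies$ (i), while the paper proves (i) $\iff$ (ii) and (i) $\iff$ (iii) separately --- which does not change the substance of the argument.
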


 \begin{proof}
 First we show that (i) $\iff$ (ii).  Obviously (i) $\implies$ (ii). 
Suppose (ii) holds.  Since $(v,f)$ is a bad pair, there is a nonempty
set of edges $S$ incident with both $v$ and $f$.  If $S$ contains both
primal and dual edges, both $v$ and $f$ are mixed.  If all edges of $S$
are primal then by (ii) one of $v$ or $f$ must contain a dual edge, and
so is mixed.  A similar argument applies if all edges of $S$ are dual,
and hence (i) holds.

 Before completing the proof, we analyze the pairs discussed in (iii).
 Consider $(\fv_i, \ff_j)$ in $G^D$ with $\fv_i \in \VPPV$ and $\ff_j
\in \FPPF$.
 Then $\fR_i = \iR_v$ and $\fB_j = \iB_f$ for some pair $(v,f)$ in $G$
where $v$ and $f$ are both primal-pure.
 Conversely, every pair $(v,f)$ in $G$ where $v$ and $f$ are both
primal-pure corresponds to a pair $(\fv_i, \ff_j)$ in $G^D$ with $\fv_i
\in \VPPV$ and $\ff_j \in \FPPF$.
 Moreover, $(\fv_i, \ff_j)$ is bad in $G^D$
 $\iff$ $\fR_i$ and $\fB_j$ intersect in two or more vertices
 $\iff$ $\iR_v$ and $\iB_f$ intersect in two or more vertices
 $\iff$ $(v,f)$ is bad in $G$.
 Similarly, there is a one-to-one correspondence between pairs $(\fv_i,
\ff_j)$ in $G^D$ with $\fv_i
\in \VDPF$ and $\ff_j \in \FDPV$ and pairs $(v,f)$ in $G$ where $v$ and
$f$ are both dual-pure, and one pair is bad if and only if the other is.

 The contrapositive of (i) says: (i${}'$) every pair $(v,f)$ in $G$ with
both $v$ and $f$ pure is good.  We prove that (i${}'$) $\iff$ (iii).
 First suppose that (i${}'$) holds.  From above, every pair $(\fv_i,
\ff_j)$ in $G^D$ mentioned in (iii) corresponds to a pair $(v,f)$ in $G$
where both $v$ and $f$ are pure, so that $(v,f)$ is good by (i${}'$). 
From above, $(\fv_i, \ff_j)$ is also good.  So (iii) holds.
 Now suppose that (iii) holds.  From above, this means that every pair
$(v,f)$ in $G$ where $v$ and $f$ are both primal-pure, or $v$ and $f$
are both dual-pure, is good.  But there are no bad pairs $(v,f)$ where
$v$ is primal-pure and $f$ is dual-pure, or vice versa (these are the
`X' situations in Table \ref{lmg}).  Hence, (i${}'$) holds.
 \end{proof}

 Next we prove a result that will be used for our global condition, so
called because it deals with the large-scale structure of the division
between primal and dual edges in $G$.  That is what determines the
elements of $\VTDR$ and $\FTPR$.  Before stating it, we need a formalism
describing how a partition of the edges of a graph embedding into two
parts divides up a surface into two parts.

 The \emph{separation graph} $\sep$ for a set of edges $D$ in a graph
embedding $G$ is obtained from the barycentric subdivision $\bary$ as
follows.  Each triangle $t$ of $\bary$ contains one vertex $u_e \in
U_1$, corresponding to $e \in E(G)$.  We colour $t$ red if $e \in D$,
and blue if $e \notin D$.
 We then take the subgraph of $\bary$ induced by the edges with a red
triangle on one side, and a blue triangle on the other side.
 Since all triangles incident with a vertex of $U_1$ are the same
colour, $V(\sep) \subseteq U_0 \cup U_2$; its elements correspond to
vertices or faces of $G$ that have edges of both $D$ and $E(G)-D$
incident with them.
 Thus, $\sep$ is actually a subgraph of the radial graph.
 
 When $D$ is a set of edges that we intend to dualize, we call $\sep$
the \emph{primal/dual separation graph}.
 Informally, $\sep$ partitions the surface into two (closed) regions,
$\Si_D$ containing dual edges of $\bary$ (union of closures of red faces
of $\bary$) and $\Si_P$ containing primal edges (union of closures of
blue faces of $\bary$); this gives a proper $2$-face-colouring of
$\sep$.  Note that the totally dual and primal subspaces satisfy $T_D
\subseteq \Si_D$ and $T_P \subseteq \Si_P$.  Basically, $\Si_D$ and
$\Si_P$ provide extensions of $T_D$ and $T_P$, respectively, into the
mixed faces of $G$ so that all of $\Si$ is covered.
 Since $\sep$ is a subgraph of the radial graph, each edge of $\sep$
corresponds to a corner, $\vk \in V(K)$, which can be thought of as its
midpoint.  These corners are precisely those with a dual edge on one
side and a primal edge on the other side, namely the mixed vertices of
$K$.

 The global condition is partly expressed in terms of a Petrie dual
(which has also been called the \emph{skew}, as in \cite{BL, Lins82}). 
Besides the representation of Petrie dual discussed earlier (toggling
the signatures of edges in a rotation system/edge signature
representation), the Petrie dual $G\pet$ of a graph embedding $G$ can be
constructed directly from the embedding.  The underlying graph is the
same as for $G$.  The boundaries of the faces of $G\pet$ are determined
by \emph{Petrie walks} in $G$.
 To find a Petrie walk we can apply the usual face-tracing algorithm,
where we stay to one side of an edge as we move along it, but cross over
every edge at its midpoint.  In other words, we can travel along the
edges, maintaining a local orientation, turning left and
right at alternate vertices (so Petrie walks are sometimes called
\emph{zigzag} or \emph{left-right} walks).  We continue until our walk
repeats.  In this way we can find a double-covering of the edges by
closed walks, discard all original faces, and glue a disk onto each
Petrie walk to obtain $G\pet$.

 \begin{lemma}[Global incidences]
 \label{global}
 \textnormal{(a)}
 There is a one-to-one correspondence between $\VTDR \cup \FTPR$ and
faces of $\sep\pet$, the Petrie dual of the primal/dual separation
graph.  The division of $F(\sep\pet)$ into faces corresponding to $\VTDR$
and to $\FTPR$ is a $2$-face-colouring of $\sep\pet$.

 \smallskip\noindent
 \textnormal{(b)}
 Every incidence between a vertex $\fv_i \in \VTDR$ and a face $\ff_j
\in \FTPR$ of $G^D$, at a corner $\vk$, corresponds to one of the
following two situations.

 \begin{itemize}[topsep=\smallskipamount,itemsep=\smallskipamount]
 \item[\textnormal{(i)}]
 Cycles $\fR_i$ and $\fB_j$ meet at a mixed corner $\vk \in V(K)$.  Then
$\fR_i$ and $\fB_j$ cross at $\vk$.  Moreover, this occurs precisely
when the faces of $\sep\pet$ corresponding to $\fv_i$ and $\ff_j$ share
the edge of $\sep$ corresponding to $\vk$.
 \item[\textnormal{(ii)}]
 Cycles $\fR_i$ and $\fB_j$ meet at a $0$-pure corner $\vk \in V(K)$.
 Then $\fR_i$ and $\fB_j$ are tangential (do not cross) at $\vk$.
 \end{itemize}

 \noindent
 In either case, both the vertex $\vkvg(\vk)$ and face $\vkfg(\vk)$ of $G$
associated with $\vk$ are mixed.
 \end{lemma}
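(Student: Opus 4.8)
The heart of both parts is a computation at a single corner $\vk=(v,f,\{e_1,e_2\})$, i.e.\ at one vertex of $K$. First I would record the cyclic arrangement of the four edges of $K$ at $\vk$. Viewing $K$ as the yellow-contraction of the embedded gem (or drawing the picture directly in $\Si$), the two edges of $\iR$ at $\vk$ cross $e_1$ and $e_2$ and are consecutive across the region $\ir_v$, while the two edges of $\iB$ parallel $e_1$ and $e_2$ and are consecutive across $\ib_f$; so the cyclic order is red, blue, blue, red (an $RBBR$ pattern, \emph{not} alternating). Next, using $\fR=\iR\sd\iQ_D$ and $\fB=\iR\sd\iQ_P$ from Proposition~\ref{bd-pd} and its proof, I would read off which of these four edges lie on $\fR$ and which on $\fB$, by corner type. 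At a mixed corner ($e_1\in D$, $e_2\in P$) each of $\fR,\fB$ uses one red and one blue edge and the four interleave as $\fB,\fR,\fB,\fR$, so $\fR$ and $\fB$ cross; when $e_1,e_2\in D$ (resp.\ $e_1,e_2\in P$), $\fR$ uses the two blue (resp.\ red) edges and $\fB$ the two red (resp.\ blue) edges, which do not interleave, so $\fR$ and $\fB$ are tangential.

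\textbf{Part (b).}
Part (b) then follows by combining this edge computation with the purity bookkeeping of the section. Suppose $\fv_i\in\VTDR$ and $\ff_j\in\FTPR$ meet at $\vk$, so $\fR_i$ and $\fB_j$ both pass through $\vk$; then $\vk$ is one of the three types above. I would rule out the impure subcases: if $e_1,e_2\in D$ with $f$ dual-pure then $\fR_i=\iB_f\in\VDPF$, and if $v$ is dual-pure then $\fB_j=\iR_v\in\FDPV$, contradicting the hypotheses; hence $e_1,e_2\in D$ forces $v$ and $f$ both mixed, i.e.\ a $0$-dual-pure corner, and symmetrically $e_1,e_2\in P$ forces a $0$-primal-pure corner. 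These are case (ii), tangential. A mixed corner automatically has both $v$ and $f$ mixed, and there $\fR_i$ uses an edge of $\iR_v$ together with an edge of $\iB_f$ (so $\fR_i$ is not a pure cycle, hence $\fR_i\in\VTDR$), and likewise $\fB_j\in\FTPR$; this is case (i), crossing. In every surviving case both $\vkvg(\vk)$ and $\vkfg(\vk)$ are mixed, as claimed. The final clause of (i)---that crossing at $\vk$ is equivalent to the two faces of $\sep\pet$ sharing the $\sep$-edge at $\vk$---I would derive from the bijection in (a).

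\textbf{Part (a).}
For part (a) I would show that the $\VTDR$- and $\FTPR$-cycles, viewed as closed curves in $\Si$, are exactly the Petrie (zigzag) walks of $\sep$. Recall that an edge of $\sep$ is precisely a mixed corner, with $\vk$ as its midpoint, and that the vertices of $\sep$ are the mixed vertices and mixed faces of $G$, a bipartite split into $U_0$ and $U_2$. By the local computation, each curve of $\VTDR\cup\FTPR$ crosses $\sep$ transversally only at midpoints of $\sep$-edges, and at each such edge exactly one $\VTDR$-curve and one $\FTPR$-curve cross (and cross each other); moreover any $\VTDR$- or $\FTPR$-cycle must use edges of both $\iR$ and $\iB$, hence contains a mixed corner and is a genuine such curve. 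Between crossings a curve hugs a vertex $w$ of $\sep$ through one run of same-type edges---the $\VTDR$-curves hug mixed vertices through their primal runs and mixed faces through their dual runs, and dually for $\FTPR$-curves---thereby joining the two $\sep$-edges consecutive in the rotation at $w$ that bound that run. Thus the combined family joins, at each vertex of $\sep$, every pair of consecutive edge-ends while crossing each edge once at its midpoint, which is exactly the incidence structure of the zigzag walks; this gives the bijection $\VTDR\cup\FTPR\tofrom F(\sep\pet)$. Finally, since every $\sep$-edge carries one $\VTDR$-crossing and one $\FTPR$-crossing, the two faces of $\sep\pet$ meeting along any edge receive opposite labels, so the $\VTDR/\FTPR$ split is a proper $2$-face-colouring.

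\textbf{Main obstacle.}
The routine parts are the symmetric-difference bookkeeping and the purity case-check. The delicate point is verifying that the curves obey the \emph{alternating} left/right turning rule that distinguishes a Petrie walk from an ordinary face walk: the ``cross at midpoints'' half of the condition is immediate from case (i), but the other half requires showing that a single $\VTDR$-curve turns one way at the mixed vertices it visits and the other way at the mixed faces, so its turns alternate as it passes alternately between $U_0$- and $U_2$-vertices of the bipartite graph $\sep$. Making this precise means fixing a local side/orientation convention along each curve and checking, at a vertex of each parity, that ``hugging through a primal run'' and ``hugging through a dual run'' are opposite turns; once that alternation is in hand, the identification with the zigzag walks of $\sep$, and hence with $F(\sep\pet)$, is forced.
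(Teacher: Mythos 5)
Your proposal is correct and follows essentially the same route as the paper: the same local $RBBR$ corner analysis (with $\fR\subseteq\iR_P\cup\iB_D$, $\fB\subseteq\iR_D\cup\iB_P$) for part (b), and the same identification of the $\VTDR$/$\FTPR$ boundary cycles with the Petrie walks of $\sep$ for part (a), with the paper merely making your ``hugging'' step explicit as a homotopic shift into the subdivided separation graph $\sepsd$. The delicate point you flag (the alternating turn rule) is exactly the point the paper settles by observing that the shifted walk follows $\bdy\Si_D$ at vertices of $U_2$ and $\bdy\Si_P$ at vertices of $U_0$.
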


 \begin{proof}
 To prove (a) and (b)(i), we need to combine the separation graph and
the corner graph.  Recall that each vertex $\vk$ of the corner graph can
be considered as the midpoint of the edge $\vkvg(\vk) \vkfg(\vk)$ of the
radial graph, and that the separation graph is a subgraph of the radial
graph.  Therefore, we take $\crg$ to be the graph obtained by
subdividing every edge of the radial graph with a vertex of $K$, then
taking the union of the result with $K$.  Thus, $V(\crg) = V(G) \cup U_2
\cup V(K)$.
 In addition to the edges of $K$, for every $\vk \in V(K)$ $\crg$ has a
path $\vkvg(\vk)\, \vk\, \vkfg(\vk)$ of length $2$.
 If we subdivide every edge of $\sep$ to form $\sepsd$, then $\sepsd$ is
a subgraph of $\crg$.  The new vertices created by subdivision are
precisely the mixed vertices of $K$.

 First we consider $\fv_i \in \VTDR$ and the corresponding cycle $\fR_i$
in $K$.  Since $\fv_i \notin \VPPV \cup \VDPF$, $\fR_i$ contains edges
of both $\iB_D$ and $\iR_P$.  So let us begin following $\fR_i$ at a
vertex $\vk_0^-$ with an incident edge of both $\iB_D$ and $\iR_P$ in
$\fR_i$, leaving by the edge of $\iB_D$.
 Remember that $\fR_i$ is a boundary cycle of $\rD$.  
 Note that $\vk_0^-$ is a mixed vertex of $K$.
 We follow edges of $\iB_{f_0} \cap \iQ_D$
where $f_0 = \vkfg(\vk_0^-) \in F(G)$,
passing through dual vertices of $K$, until we reach a mixed vertex
$\vk_0^+$, still with $\vkfg(\vk_0^+) = f_0$.
 At that point we begin following edges of $\iR_{v_0} \cap \iQ_P$, where
$v_0 = \vkvg(\vk_0^+)$, passing through primal vertices of $K$, until we
reach a mixed vertex $\vk_1^-$, still with $\vkvg(\vk_1^-) = v_0$.
 Then we follow edges of $\iB_{f_1} \cap \iQ_D$ for the new face $f_1 =
\vkfg(\vk_1^-) \in F(G)$, and so on.  We continue in this way,
alternating between following paths in $\iB_{f_h} \cap \iQ_D$ for faces
$f_h=f_0, f_1, \ldots$
and paths in $\iR_{v_h} \cap \iQ_P$ for vertices $v_h=v_0, v_1, \ldots$.
 We stop when $\vk_m^- = \vk_0^-$ for some $m \ge 1$, and our
cycle $\fR_i$ has closed.

 We now homotopically shift $\fR_i$ to obtain a new closed walk
$\hssd{\fR_i}$ in $L$, as follows.  Orient $\fR_i$ in the direction we
traversed it above, $\vk_0^- \ldots \vk_0^+ \ldots \vk_1^- \ldots$.  Let
$\fR_i[\vk,\vk']$ be the segment of $\fR_i$ from some $\vk$ to some
$\vk'$ in this direction (if $\vk = \vk'$ this is a single vertex).
 In the segment $\fR_i[\vk_h^-, \vk_h^+] \subseteq \iB_{f_i} \cap
\iQ_D$ every vertex $\vk$ has $\vkfg(\vk) = f_h$, so we can
homotopically shift this to the path $\vk_h^- u_{f_h} \vk_h^+$, for $h =
0, 1, 2, \ldots, m-1$.
 Similarly, in each segment $\fR_i[\vk_h^+, \vk_{h+1}^-] \subseteq
\fR_{v_i} \cap \iQ_P$ (subscripts on $\vk_h^\pm$ interpreted modulo $m$)
every vertex $\vk$ has $\vkvg(\vk) = v_h$, so we can homotopically shift
this to $\vk_h^+ v_h \vk_{h+1}^-$, for $h = 0, 1, 2, \ldots, m-1$.
 Therefore,
 $\hssd{\fR_i} = \vk_0^- u_{f_0} \vk_0^+ v_0 \vk_1^- u_{f_1} \vk_1^+ v_1
	\vk_2^- \ldots v_{m-1} (\vk_m^- = \vk_0^-)$.

 Since each $\vk_h^\pm$ is a mixed vertex of $K$, each path $u_{f_h}
\vk_h^+ v_h$ and each path $v_h \vk_{h+1}^- u_{f_{h+1}}$ is just a
subdivided edge of $\sep$.  Skipping the vertices $\vk^\pm_h$ of degree
$2$ in $\sepsd$, $\hssd{\fR_i}$ becomes a closed walk in $\sep$, namely
 $\hs{\fR_i} = u_{f_0} v_0 u_{f_1} v_1 \ldots v_{m-1} u_{f_0}$.
 We notice that this walk turns to follow $\bdy \Si_D$ at each
$u_{f_h}$, and to follow $\bdy \Si_P$ at each $v_h$.  In other
words, it turns alternately left and right for some local orientation,
and is a Petrie walk in $\sep$, as required.

 In a similar way, there is a Petrie walk $\hs{\fB_j}$ in $\sep$ for
each $\ff_j \in \FTPR$, which turns to follow $\bdy \Si_D$ at each
vertex in $V(G)$, and to follow $\bdy \Si_P$ at each vertex in $U_2$.

 So the $\hs{\fR_i}$ and $\hs{\fB_j}$ form the faces of $\sep\pet$. 
Every mixed corner of $K$ has both some $\fR_i$ for $\fv_i \in \VTDR$
and some $\fB_j$ for $\ff_j \in \FTPR$ passing through it, so every edge
of $\sep$, or $\sep\pet$, belongs to some $\hs{\fR_i}$ and some
$\hs{\fB_j}$.
 Hence, each type of face in $\sep\pet$ covers each edge of $\sep\pet$
exactly once, so we have the required one-to-one correspondence and
$2$-face-colouring.

 \smallskip\noindent
 (b) Each incidence of $\fv_i$ and $\ff_j$ is represented by a vertex
$\vk \in V(K)$ common to $\fR_i$ and $\fB_j$.
 Suppose $\vk$ corresponds to $v \in V(G)$, $f \in F(G)$, and ends of
$e, e' \in E(G)$.  Then the faces around $\vk$ occur in the order
$\ir_v$, $\iq_e$, $\ib_f$, $\iq_{e'}$.

 First suppose $\vk$ is mixed.  Then $e \in D$ and $e' \in P$ (or vice
versa), so $v$ and $f$ are mixed.  The edges incident with $\vk$ in $K$
then belong to $\iR_P$, $\iR_D$, $\iB_D$ and $\iB_P$ in that order (or
its reverse). 
 Now $\fR_i \subseteq \iR_P \cup \iB_D$ and
$\fB_j \subseteq \iR_D \cup \iB_P$, which therefore must cross at
$\vk$.
 Also, our argument for (a), above, shows that if $\vk$ is mixed then
the corresponding faces $\hs{\fR_i}$ and $\hs{\fB_j}$ of $\sep\pet$
share the edge $\vkvg(\vk) u_{\vkfg(\vk)}$ of $\sep\pet$.  Conversely,
if $\hs{\fR_i}$ and $\hs{\fB_j}$ share an edge of $\sep\pet$, or
equivalently of $\sep$, then they both contain the mixed vertex of $K$
with which this edge is subdivided in $\sepsd$.

 So now suppose $\vk$ is pure.  Assume first that $\vk$ is primal-pure. 
Then $e, e' \in P$.  The edges incident with $\vk$ in $K$ are therefore
$\ek_1, \ek_2 \in \iR_v \cap \iQ_P \subseteq \iR_P$ followed by $\ek_3,
\ek_4 \in \iB_f \cap \iQ_P \subseteq \iB_P$ in that order.
 Therefore, $\fR_i \subseteq \iR_P \cup \iB_D$ and $\fB_j \subseteq
\iR_D \cup \iB_P$ do not cross at $\vk$.
 If $v$ is pure, then since $\iR_v$ and $\fR_i$ both contain $\ek_1,
\ek_2 \in \iR_P$, $v$ is primal-pure, and then $\fR_i = \iR_v$ so that
$\fv_i \in \VPPV$, contradicting $\fv_i \in \VTDR$.  So $v$ is mixed.  A
similar argument shows that $f$ is also mixed.  Therefore $\vk$ is
$0$-pure.
 Similar arguments again apply if $\vk$ is dual-pure.

 Therefore, the cases where $\vk$ is mixed or pure correspond exactly to
parts (i) and (ii) as stated.  In both cases $v=\vkvg(\vk)$ and
$f=\vkfg(\vk)$ are mixed.
 \end{proof}

 Condition (b)(i) here is quite nice, being expressible in terms of the
primal/dual separation graph, which reflects global structure. 
 Condition (b)(ii) does not seem to be expressible in such a succinct
way.
 However, it does restrict which corners we need to examine: we do not
need to examine $2$-pure or $1$-pure corners for vertex/face pairs of
this type.

 Note that if $\sep$ has a vertex $\vs$ of degree two, then situation
 (b)(i) occurs on both edges of $\sep\pet$ incident with $\vs$ for the
two faces of $\sep\pet$ that pass through $\vs$, and hence the
corresponding vertex/face pair in $G^D$ is bad.  This happens if there
is a vertex or face of $G$ that is incident exactly once with a region
of $\Si_P$ and exactly once with a region of $\Si_D$.
 For example, dualizing a single link (non-loop) or colink (edge
incident with two distinct faces) always creates a bad pair.

 Finally we prove a result that we will use for our midrange condition,
so called because it deals with the incidence of local structures (faces
or vertices in $G^D$ coming from pure faces or vertices in $G$) with
large-scale structures (faces or vertices in $G^D$ coming from totally
dual or primal regions).
 Like condition (b)(ii) in Lemma \ref{global}, it does not seem to be
expressible in a succinct way, but it does provide some information
about where we should focus our attention.

 \begin{lemma}[Midrange incidences]\label{midrange}
 Suppose we have an incidence in $G^D$ between $\fv_i$ and $\ff_j$
corresponding to a corner $\vk \in V(K)$ where $\fR_i$ and $\fB_j$ meet.
 Let $\vkvg(\vk) = v$ and $\vkfg(\vk) = f$.

 \smallskip\noindent
 \textnormal{(a)}
 If $\fv_i \in \VTDR$ and $\ff_j \in \FDPV$, then $\ff_j$
corresponds to $v$ and $f$ is mixed.

 \smallskip\noindent
 \textnormal{(b)}
 If $\fv_i \in \VTDR$ and $\ff_j \in \FPPF$, then $\ff_j$
corresponds to $f$ and $v$ is mixed.

 \smallskip\noindent
 \textnormal{(c)}
 If $\ff_j \in \FTPR$ and $\fv_i \in \VPPV$, then $\fv_i$
corresponds to $v$ and $f$ is mixed.

 \smallskip\noindent
 \textnormal{(d)}
 If $\ff_j \in \FTPR$ and $\fv_i \in \VDPF$, then $\fv_i$
corresponds to $f$ and $v$ is mixed.

 \smallskip\noindent
 In all of these cases $\vk$ is $1$-pure.
 \end{lemma}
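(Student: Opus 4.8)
The plan is to prove part (a) in full and obtain (b), (c) and (d) by symmetry. Two symmetries are available: the vertex/face symmetry (passing to $G^*$, which interchanges $\iR$ with $\iB$ and hence $\fR$ with $\fB$, and sends the vertex types $\VPPV,\VDPF,\VTDR$ to the face types $\FPPF,\FDPV,\FTPR$ respectively) and the primal/dual symmetry (replacing $D$ by $P$, which also interchanges $\fR$ with $\fB$ and sends $\VPPV\leftrightarrow\FDPV$, $\VDPF\leftrightarrow\FPPF$, $\VTDR\leftrightarrow\FTPR$). Under these, case (a) maps to (d) and to (c), and (b) maps to (c) and (d), so a single case suffices; alternatively each remaining case can be argued verbatim. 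Throughout I will use the containments $\fR_i\subseteq\iR_P\cup\iB_D$ and $\fB_j\subseteq\iR_D\cup\iB_P$ together with the tracing description of the cycles $\fR_i$, all from the proof of Lemma \ref{global}, and the local picture at a corner $\vk$ with $\vkvg(\vk)=v$, $\vkfg(\vk)=f$, $\vkeg(\vk)=\{e,e'\}$: the four edges of $K$ at $\vk$ in cyclic order are $\iR_v\cap\iQ_e$, $\iB_f\cap\iQ_e$, $\iB_f\cap\iQ_{e'}$, $\iR_v\cap\iQ_{e'}$, so the two red edges lie on $\iR_v$ and the two blue edges lie on $\iB_f$, each edge being dual or primal according to whether its associated edge of $G$ lies in $D$ or $P$.

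For (a), I first identify the face. Since $\ff_j\in\FDPV$, its cycle $\fB_j$ equals $\iR_w$ for some dual-pure vertex $w$ of $G$; as $\vk\in\fB_j$ lies on $\iR_w$ it is a corner of $w$, so $w=\vkvg(\vk)=v$. Thus $\ff_j$ corresponds to $v$, and $v$ is dual-pure, which forces $e,e'\in D$. Consequently both red edges at $\vk$ lie in $\iR_D$. Since $\fR_i\subseteq\iR_P\cup\iB_D$, the cycle $\fR_i$ cannot use either red edge, so it passes through $\vk$ along the two blue edges, both of which lie on $\iB_f$ and belong to $\iB_D$; in other words $\fR_i$ follows $\iB_f$ at $\vk$.

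It remains to show that $f$ is mixed. Since $e\in D$ is incident with $f$, the face $f$ is not primal-pure, so I only need to rule out $f$ being dual-pure. Suppose it were. Then every edge of $f$ lies in $D$, so every corner of $f$ is dual-pure, and by the local picture above, at each such corner the two blue edges of $K$ lie on $\iB_f$, belong to $\iB_D$, and are exactly the two edges by which $\fR_i$ may pass. Hence, starting from the blue edge of $\iB_f$ that $\fR_i$ uses at $\vk$, the cycle $\fR_i$ follows $\iB_f$ through each successive (dual-pure) corner and never branches off, so it closes up as $\fR_i=\iB_f$. But then $\fv_i\in\VDPF$, contradicting $\fv_i\in\VTDR$. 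Therefore $f$ is mixed. Finally $\vk$ has $e,e'\in D$ with $v$ pure and $f$ mixed, so exactly one of $v,f$ is pure and $\vk$ is $1$-pure, completing (a).

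The main obstacle is exactly the step just described: showing the relevant vertex or face must be mixed. Its content is that a cycle in $\VTDR$ can only leave a blue cycle $\iB_f$ (or, in the mirror cases, a red cycle $\iR_v$) at a mixed corner, and this requires combining the explicit tracing of $\fR_i$ from Lemma \ref{global} with careful bookkeeping of which of the four edges at $\vk$ are dual and which are primal. Once this is in place, the identification of $\ff_j$ (or $\fv_i$) and the $1$-purity of $\vk$ are immediate, and the two symmetry reductions convert (a) into the remaining three cases with no further work.
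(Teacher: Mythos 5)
Your proposal is correct and follows essentially the same route as the paper: identify $\ff_j$ with $v$ via $\fB_j=\iR_u$ sharing the corner $\vk$ with $\iR_v$, then rule out $f$ being pure by showing this would force $\fR_i=\iB_f$ and hence $\fv_i\in\VDPF$, contradicting $\fv_i\in\VTDR$. Your explicit edge-by-edge tracing of $\fR_i$ around $\iB_f$ is just a more hands-on justification of the step the paper gets from the disjointness of the boundary cycles of $\rD$, and handling the remaining cases by symmetry matches the paper's ``the others are similar.''
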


 \begin{proof}
 We prove (a); the others are similar.  Since $\ff_j \in \FDPV$, then
$\fB_j = \iR_{u}$ for some $u \in V(G)$.  But $\fB_j=\iR_u$ shares $\vk$
with $\iR_v$, so $u=v$, and $v$ is dual-pure.  Now if $f$ is pure then
it is also dual-pure, since it contains two edge-ends incident with $v$,
so then $\iB_f = \fR_k$ for some $k$.  But $\fR_k$ shares $\vk$ with
$\fR_i$, so $k=i$ and $\iB_f = \fR_i$, giving $\fv_i \in \VDPF$, a
contradiction.  Hence, $f$ is mixed, and since $v$ is pure, $\vk$ is
$1$-pure.
 \end{proof}

 Essentially, the local incidences occur at $2$-pure corners, the global
incidences occur at mixed or $0$-pure corners, and midrange incidences
occur at $1$-pure corners.  This means that for any given pair of cycles
$\fR_i$ and $\fB_j$, we can restrict our attention only to a certain set
of corners.  If some types of corners do not occur, then it may greatly
simplify the checking of whether $G^D$ is closed $2$-cell or not.

 Now we can state our main theorem, which follows immediately from
Lemmas \ref{local}, \ref{global} and \ref{midrange}.
 Call a pure vertex of $G$ \emph{exposed} if it has more than one
incidence with mixed faces of $G$ (in other words, more than one corner
at that vertex is $1$-pure).
 Similarly, call a pure face of $G$ \emph{exposed} if it has more than
one incidence with mixed vertices of $G$ (in other words, more than one
corner of that face is $1$-pure).

 \begin{theorem}\label{main}
 Suppose $G$ is a graph embedding with at least two edges, and $D
\subseteq E(G)$.  Then the
 partial dual $G^D$ is closed $2$-cell if and only if all of the
following conditions hold.

 \smallskip\noindent
 \textnormal{(LC)}
 For every bad vertex/face pair $(v,f)$ in $G$, at least one of $v$
or $f$ is mixed (or, equivalently, $\cobdy_G v \cup \bdy_G f$ contains
both a primal and a dual edge).

 \smallskip\noindent
 \textnormal{(MC)}
 For every exposed primal-pure (dual-pure) vertex $v$ of $G$, every
boundary walk of a totally primal (dual) region passes through a corner
at $v$ at most once.
 Also, for every exposed dual-pure (primal-pure) face $f$ of $G$, every
boundary walk of a totally primal (dual) region passes through a corner
of $f$ at most once.

 \smallskip\noindent
 \textnormal{(GC)}
 The embedding $\sep\pet$, the Petrie dual of the primal/dual
separation graph of $G$, has no two faces that share two or more edges.
 Also, for each $0$-pure corner, with associated vertex $\fv_i$ and
$\ff_j$ in $G^D$, the faces of $\sep\pet$ associated with $\fv_i$ and
$\ff_j$ do not share an edge in $\sep\pet$.  Nor is there another
$0$-pure corner that is associated with the same $\fv_i$ and $\ff_j$ in
$G^D$.
 \end{theorem}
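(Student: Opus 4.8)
The plan is to reduce the statement to the nonexistence of bad vertex/face pairs and then work through the nine entries of Table \ref{lmg} group by group, quoting the relevant lemma in each group. Since $G^D$ is a partial dual of a $2$-cell embedding it is itself $2$-cell, so (using that $G$, and hence $G^D$, has at least two edges, which rules out the degenerate loop situations discussed earlier) $G^D$ is closed $2$-cell exactly when no vertex $\fv_i$ and face $\ff_j$ of $G^D$ form a bad pair. By the classification developed above each $\fv_i$ lies in precisely one of $\VPPV$, $\VDPF$, $\VTDR$ and each $\ff_j$ in precisely one of $\FDPV$, $\FPPF$, $\FTPR$, so it suffices to show that, for each of the nine type combinations, the conditions stated in the theorem are exactly equivalent to the absence of a bad pair of that combination.

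First I would dispose of the two entries marked ``X'' together with the two marked LC. When $\fv_i \in \VPPV$ and $\ff_j \in \FDPV$, the cycles $\fR_i$ and $\fB_j$ are both of the form $\iR_v$ in $K$ and hence lie in the $2$-factor $\iR$; they are distinct (a vertex cannot be simultaneously primal-pure and dual-pure) and so vertex-disjoint, meaning $\fv_i$ and $\ff_j$ are not even incident. The case $\fv_i \in \VDPF$, $\ff_j \in \FPPF$ is symmetric, so these combinations never produce a bad pair, for any $D$. The two LC entries are governed by Lemma \ref{local}: condition (LC) of the theorem is literally statement (i) of that lemma, which the lemma shows is equivalent to its statement (iii), namely that $G^D$ has no bad pair with $\fv_i \in \VPPV$, $\ff_j \in \FPPF$ or with $\fv_i \in \VDPF$, $\ff_j \in \FDPV$. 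Thus (LC) holds if and only if neither LC combination gives a bad pair.

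Next I would treat the four MC entries using Lemma \ref{midrange}. That lemma tells us each incidence of these types occurs at a $1$-pure corner, identifies which vertex or face of $G$ the ``local'' object corresponds to, and forces the complementary object to be mixed. A bad pair in an MC combination therefore amounts to a single pure vertex or pure face of $G$ meeting a totally primal or totally dual boundary walk in two distinct $1$-pure corners; reading off case (c), for instance, a bad pair with $\fv_i \in \VPPV$, $\ff_j \in \FTPR$ says exactly that a primal-pure vertex $v$ is exposed and some totally primal boundary walk passes through two corners at $v$. Matching the four cases of Lemma \ref{midrange} against the four clauses of (MC) then shows that (MC) is precisely the assertion that no MC combination yields a bad pair. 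Finally, for the single GC entry ($\fv_i \in \VTDR$, $\ff_j \in \FTPR$) I would invoke Lemma \ref{global}: part (b) splits every such incidence into a type-(i) incidence at a mixed corner---occurring exactly when the faces of $\sep\pet$ corresponding to $\fv_i$ and $\ff_j$ share the edge of $\sep$ at that corner, by part (a)---and a type-(ii) incidence at a $0$-pure corner. Since a bad pair needs two incidences, there are three combinations to forbid: two type-(i) incidences (two shared edges of $\sep\pet$), one of each type (a $0$-pure corner together with a shared edge of the same two faces), and two type-(ii) incidences (two $0$-pure corners with the same $\fv_i$ and $\ff_j$); these are exactly the three clauses of (GC). Combining all nine cases gives that $G^D$ has no bad pair iff (LC), (MC) and (GC) all hold.

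I expect the main obstacle to lie in the bookkeeping of the last two paragraphs: matching the compact ``exposed''/double-incidence phrasing of (MC) and the three-clause phrasing of (GC) with the corner-by-corner descriptions supplied by Lemmas \ref{midrange} and \ref{global}, and in confirming that the split into the nine combinations is genuinely exhaustive. In particular I would want to be careful that the automatic $2$-cellularity of a partial dual is legitimate, so that ``closed $2$-cell'' really does reduce cleanly to ``no bad vertex/face pair'' and the three lemmas together cover every way such a pair could arise.
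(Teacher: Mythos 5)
Your proposal is correct and follows essentially the same route as the paper: the paper gives no separate written proof, stating only that the theorem ``follows immediately from Lemmas \ref{local}, \ref{global} and \ref{midrange},'' and your case-by-case assembly over the nine entries of Table \ref{lmg} is exactly the intended argument. The bookkeeping you flag as a potential obstacle works out as you describe, since each of the three lemmas already localizes its type of incidence to the corners named in the corresponding condition.
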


 One situation where we can check whether an embedding is closed
$2$-cell in a nice way is covered by the following corollary.

 \begin{corollary}\label{simplemain}
 Suppose $G$ is a closed $2$-cell graph embedding with
at least two edges, and $D \subseteq E(G)$.  Suppose that for this $D$
there are no exposed pure vertices or faces, and no $0$-pure corners. 
Then $G^D$ is closed $2$-cell if and only if $\sep\pet$, the Petrie dual
of the primal/dual separation graph, has no two faces that share two or
more edges.
 \end{corollary}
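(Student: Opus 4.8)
The plan is to deduce this directly from Theorem \ref{main} by checking that, under the corollary's extra hypotheses, each of the three conditions (LC), (MC) and (GC) either holds vacuously or collapses to the single stated condition on $\sep\pet$. First I would dispose of the local condition. Since $G$ is closed $2$-cell, it has no bad vertex/face pair at all: as noted just before Lemma \ref{local}, being closed $2$-cell is exactly the statement that every vertex and face are incident at most once. Condition (LC) quantifies only over bad vertex/face pairs $(v,f)$ in $G$, so with no such pairs present (LC) is satisfied vacuously.

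Next I would handle the midrange condition. Condition (MC) is a statement quantified entirely over exposed primal-pure and dual-pure vertices, and over exposed dual-pure and primal-pure faces, of $G$. By hypothesis $G$ has no exposed pure vertices or faces for this $D$, so there is nothing to verify and (MC) holds vacuously as well.

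The remaining work lies with the global condition, which has three clauses. The first clause is precisely the assertion that no two faces of $\sep\pet$ share two or more edges---exactly the condition appearing in the statement of the corollary. The second and third clauses both concern $0$-pure corners: the second rules out a shared edge of $\sep\pet$ for the faces associated with the $\fv_i$ and $\ff_j$ arising from a $0$-pure corner, and the third forbids a second $0$-pure corner associated with the same pair $\fv_i$, $\ff_j$ in $G^D$. Since by hypothesis there are no $0$-pure corners, both of these clauses are vacuously satisfied, so (GC) reduces to its first clause alone.

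Combining these observations, under the corollary's hypotheses conditions (LC), (MC), and the last two clauses of (GC) all hold automatically, so the full criterion of Theorem \ref{main} reduces to the single surviving requirement that $\sep\pet$ has no two faces sharing two or more edges; hence $G^D$ is closed $2$-cell if and only if this holds. There is no substantial obstacle here beyond reading off correctly which clauses become vacuous; the only point needing a moment's care is invoking the equivalence between being closed $2$-cell and having no bad vertex/face pair in order to eliminate (LC).
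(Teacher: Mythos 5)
Your proposal is correct and matches the paper's (implicit) argument exactly: the corollary is presented as an immediate consequence of Theorem \ref{main}, with (LC) vacuous because a closed $2$-cell embedding has no bad vertex/face pairs, (MC) vacuous for lack of exposed pure vertices or faces, and the last two clauses of (GC) vacuous for lack of $0$-pure corners, leaving only the stated condition on $\sep\pet$.
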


 To convince the reader that this corollary, and indeed our main
theorem, is useful, we now provide an example where Corollary
\ref{simplemain} can be used to demonstrate that a nontrivial partial
dual is closed $2$-cell.
 In Figure \ref{torusgood} we see an example of a closed $2$-cell
embedding on the torus (represented in the standard way as a rectangle,
where we consider the top and bottom sides to be identified and the left
and right sides to be identified).  It is made up of eight diamonds
(copies of $K_4-e$).  We dualize the four diamonds oriented vertically
(dual edges are denoted by wavy curves, as usual).  Then the pure
vertices, pure faces, and totally primal or dual boundary walks are all
similar to those shown in the figure. 
 Every pure vertex touches exactly one mixed face, and every
pure face touches exactly one mixed vertex, so there are no exposed pure
vertices or faces.
 Every corner is mixed (between two diamonds), $2$-pure (four central
corners inside each diamond), or $1$-pure (remaining corners); there are
no $0$-pure corners.
 Therefore, the only thing we need to check is that the Petrie dual of
the primal/dual separation graph does not have two faces sharing two
edges.  Equivalently, we just need to show that boundary walks of a
totally primal region and of a totally dual region never intersect more
than once.  But the boundary walks of totally primal regions run
horizontally across our figure, and those of totally dual regions run
vertically, and so they always meet exactly once.  So we know that the
partial dual $G^D$ constructed in this situation is closed $2$-cell.

 \begin{figure}[tb]
  \lline{%
	\hfill
	\scalebox{1}{\includegraphics{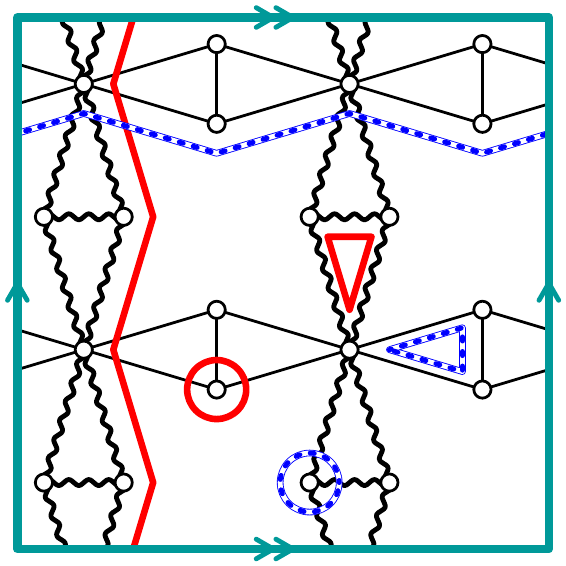}}%
	\hfill
  }
 \caption{A closed $2$-cell nontrivial partial dual
	\label{torusgood}}
 \end{figure}

 We can also provide the reader with another example, also for an
embedding on the torus, where a partial dual is guaranteed \emph{not} to
be closed $2$-cell.
 It is not difficult to construct a toroidal graph embedding with primal
and dual edges so that the primal/dual separation graph $\sep$ forms a
$2$-face-colourable grid, i.e., a standard toroidal embedding of the
cartesian product $C_{2m} \times C_{2n}$ for some $m, n \ge 2$,
represented in the usual rectangular model of the torus by horizontal
and vertical lines. 
 The Petrie walks in the primal/dual separation graph are then
stair-stepping walks that go up and right, or down and right.  The two
Petrie walks sharing an edge go in different directions.  In the usual
representation of the homotopy group of the torus as $\mZ \oplus \mZ$,
the up-and-right walks will have type $(a,b)$, while the down-and-right
walks will have type $(a, -b)$, for some $a, b \ge 1$.  It is well known
that two closed curves of homotopy types $(a, b)$ and $(c, d)$ in the
torus must intersect at least $|ad-bc|$ times, so our walks must
intersect at least $2ab \ge 2$ times.
 If stair-stepping walks going in opposite directions intersect, they
must do so along an edge.  Therefore, any two Petrie walks going in
different directions share at least two edges, and so the partial dual
cannot be closed $2$-cell.


 \smallskip
 We conclude this section by discussing some graph embeddings that have
no partial dual that is closed $2$-cell.  Our condition involves
separability.

 We have mentioned that a graph with a closed $2$-cell embedding must be
nonseparable.  In other words, a separable graph does not have a closed
$2$-cell embedding.  However, a separable graph may have a partial dual
that is closed $2$-cell.
 We can see this by working backwards.  Suppose we take a closed
$2$-cell embedding $G$, and let $D$ be the edges of a spanning tree in
$G$.  
 Then the region $\rD$ is just a fattened version of the tree formed by
$D$, and so has a single boundary component.  Therefore, by Proposition
\ref{bd-pd}, $H=G^D$ has a single vertex, and every edge is a loop, so $H$
is separable (in fact, every edge can be separated from all the other edges).
 But $H^D = G$, which is closed $2$-cell.

  So we need a stronger condition than just separability to guarantee
that a graph embedding $G$ has no partial dual that is closed $2$-cell.
 Our reasoning uses gems, and we need the following standard result
about proper $3$-edge-colourings of cubic graphs.


 \begin{parity}[\cite{Blan46,Desc48}]
 Suppose $S$ is an edge cut in a properly $3$-edge-coloured cubic graph.
Let $S_c$ be the set of all edges in $S$ of colour $c$. 
Then $|S| \cgr |S_c|\ (\mmod 2)$.
 \end{parity}

 \begin{lemma}\label{gem-ec}
 Suppose $M$ is a gem.  Then $M$ is $2$-edge-connected, every $2$-edge
cut in $M$ consists of two edges of the same colour, if $|V(M)| > 4$ and
$M$ has a $2$-edge cut then it has one consisting of two yellow edges,
and if $M$ has no $2$-edge cut then it is cyclically-$4$-edge-connected.
 \end{lemma}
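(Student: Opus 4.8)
My plan is to derive all four assertions from the Parity Lemma together with the single structural feature of a gem that matters here: since every vertex meets exactly one edge of each colour, each colour class is a perfect matching, and the union of the red and blue matchings is a disjoint union of the red-blue bigons, which are all $4$-cycles. Everything below exploits this $4$-cycle structure.

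For $2$-edge-connectedness I would apply the Parity Lemma to a hypothetical bridge. A single cut edge $S=\{\ek\}$ has $|S|=1$, but for the two colours $c$ different from that of $\ek$ we get $|S_c|=0$, contradicting $|S|\cgr|S_c|$; hence $M$ has no bridge and, being connected, is $2$-edge-connected. The statement about $2$-edge cuts is equally quick: if $|S|=2$ then $|S_c|$ is even for every colour $c$, and since the three colour-counts sum to $2$ the only possibility is that one colour accounts for both edges, so the two cut edges share a colour.

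The monochromatic yellow refinement (part three) is where the hypothesis $|V(M)|>4$ is used. Suppose a $2$-edge cut separating a set $X$ consists of two red edges (the blue case is symmetric under interchanging red and blue, which preserves the $4$-cycle structure and leaves yellow as the ``third'' colour). Counting crossings bigon by bigon shows both red edges lie in a single red-blue $4$-cycle $Q=(a,b,c,d)$ with red edges $ab,cd$ in the cut and blue edges $bc,da$ not in it; the non-crossing blue edges force $\{a,d\}\subseteq X$ and $\{b,c\}\subseteq\bar X$. I would then slide the cut onto yellow edges. If there is no yellow edge joining $b$ and $c$, then their yellow neighbours $b',c'$ lie in $\bar X\setminus\{b,c\}$, and moving $b,c$ into $X$ replaces the two red cut edges by the two distinct yellow edges $bb',cc'$, a yellow $2$-edge cut with both sides nonempty. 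If instead $bc$ is a (blue-yellow) digon, I would symmetrically examine $a,d$: if $ad$ is not a digon, moving $a,d$ out of $X$ yields the yellow cut $\{aa',dd'\}$. The only remaining possibility is that $bc$ and $ad$ are both blue-yellow digons, in which case all edges incident to $a,b,c,d$ lie inside $\{a,b,c,d\}$, forcing $V(M)=\{a,b,c,d\}$, which is excluded by $|V(M)|>4$. I expect the main obstacle here to be the bookkeeping that each slid cut is genuinely a $2$-edge cut (distinct edges, both sides nonempty); this is exactly where the gem conditions forbidding loops and red-blue parallel edges, and the hypothesis on $|V(M)|$, are needed.

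For cyclic $4$-edge-connectivity (part four), I would first note that having no $1$- or $2$-edge cut (the latter by hypothesis) makes $M$ \emph{$3$-edge-connected}, so it suffices to rule out a cyclic $3$-edge cut; a parity count in a cubic graph shows that any $3$-edge cut with both sides of size at least $2$ has each side of odd size at least $3$ and hence containing a cycle, so I must show no nontrivial $3$-edge cut exists at all. Given a $3$-edge cut separating $X$ with $|X|,|\bar X|\ge 2$, the Parity Lemma (now with $|S|=3$) forces exactly one edge of each colour. The red and blue cut edges then lie in a common red-blue $4$-cycle $Q$, and tracing $Q$ shows that one of its vertices, say $v_1$, is the unique vertex of $Q$ on its side of the cut and carries both the red and the blue cut edge. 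In the nontrivial case the yellow edge at $v_1$ cannot cross the cut (otherwise all three cut edges meet $v_1$, forcing $X=\{v_1\}$), so moving $v_1$ to the other side removes the red and blue cut edges and introduces only that one yellow edge; together with the original yellow cut edge this is a $2$-edge cut, contradicting the hypothesis. Hence no nontrivial $3$-edge cut exists and $M$ is cyclically-$4$-edge-connected. The crossing analysis of $Q$ — checking that all three cut edges are correctly located relative to $Q$ before the single-vertex shift is performed — is the second technical obstacle I anticipate.
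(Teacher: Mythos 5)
Your proof is correct and follows essentially the same route as the paper's: the Parity Lemma disposes of the first two claims, and the last two are obtained by sliding the cut across the red--blue $4$-cycle containing its non-yellow edges (moving a blue-adjacent pair of vertices, or a single vertex, across the cut) to manufacture a yellow $2$-edge cut or to show the $3$-edge cut is trivial. Your write-up is somewhat more explicit than the paper's about verifying that each slid cut is genuine (distinct edges, both sides nonempty), but the underlying ideas are identical.
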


 \begin{proof}
 An edge cut of a single edge cannot satisfy the Parity Lemma for all
colours, so any edge cut must have at least two edges.  Moreover, if
there is a $2$-edge cut, then the Parity Lemma implies that both edges
are the same colour.

 Assume there is a red $2$-edge cut $\{\eg_1, \eg_2\}$ and no yellow
$2$-edge cut.  Suppose $\eg_1$ joins $\vg_1$ and $\vg'_1$.  The red-blue
bigon containing $\eg_1$ must also contain $\eg_2$, and must be a cycle
$(\vg_1 \vg'_1 \vg'_2 \vg_2)$ where $\eg_2$ joins $\vg_2$ and $\vg'_2$. 
 Then the two yellow edges incident with $\vg_1$ and $\vg_2$ form a
$2$-edge cut, unless they are the same edge.  By the same reasoning, the
two yellow edges incident with $\vg'_1$ and $\vg'_2$ form a $2$-edge cut
unless they are the same edge.
 But then $|V(M)|=4$, a contradiction.
 A similar argument applies to a blue $2$-edge cut.

 Suppose $M$ has no $2$-edge cut, but has a $3$-edge cut $S = \{\eg_1,
\eg_2, \eg_3\}$.  Then $\eg_1, \eg_2, \eg_3$ are different colours by
the Parity Lemma.
 Suppose $\eg_1$ is red and $\eg_2$ is blue.  The red-blue bigon
containing $\eg_1$ must use $\eg_2$, which means that $\eg_1$ and
$\eg_2$ share a vertex $\vg$. 
 Let $\eg_4$ be the yellow edge incident with $\vg$.  If $\eg_4 \ne
\eg_3$, then $\{\eg_3, \eg_4\}$ is a $2$-edge cut, which is impossible,
so $\eg_4 = \eg_3$ and all edges of $S$ are incident with the same
vertex.  This is true for all $3$-edge cuts, so $M$ is
cyclically-$4$-edge-connected.
 \end{proof}

 We can interpret $2$-edge cuts in the gem $M$ in terms of the graph
embedding $G$.  Suppose $S = \{\eg_1, \eg_2\}$ is a $2$-edge cut that
separates $M$ into two components $M_1$ and $M_2$.
 By Lemma \ref{gem-ec} both edges of $S$ are the same colour.

 If $S$ is yellow, then there is a red-yellow bigon $\iR_v$ and a
blue-yellow bigon $\iB_f$ that both use $\eg_1$ and $\eg_2$.
 From our discussion at the beginning of this section, this means that
$(v,f)$ is a bad vertex/face pair in $G$.
 Taking the embedding of $M$ corresponding to, and in the same surface
as, the embedding $G$, we can form a simple closed curve $\Psi$
intersecting each of $g_1$ and $g_2$ exactly once, contained in
$\bar{\ir_v} \cup \bar{\ib_f}$ from $M$, and also contained in $f \cup
v$ from $G$.  Then $\Psi$ separates the surface into two components, one
containing $M_1$ and the other containing $M_2$.  Deleting $v$ and $f$
deletes $\Psi$, so it also disconnects the surface.
 We call $(v,f)$ a \emph{separating vertex/face pair}.

 If $S$ is red, then there is a red-yellow bigon $\iR_v$ and a red-blue
bigon $\iQ_e$ that both use $\eg_1$ and $\eg_2$.  In $G$ this means that
the edge $e$ is a loop at $v$.  In a manner similar to the previous
paragraph, we can construct a simple closed curve $\Psi$ contained in
$\bar{\ir_v} \cup \bar{\iq_e}$, and also passing through $v$, which
separates $\Si$.  In this case $\Psi$ is homotopic (with base point $v$)
to $e$, and so $e$ is a separating simple closed curve.  We call $e$ a
\emph{separating loop}.

 If $S$ is blue, then we have the dual situation to when $S$ is red.  In
this case the edge $e^*$ in $G^*$ dual to $e$ is a separating loop, so
we say $e$ is a \emph{separating coloop}: $e$ is an edge with the same
face on both sides that is also a cutedge in $G$.

 Putting these together with Lemma \ref{gem-ec} we obtain the following
sufficient condition for a graph embedding to have no closed $2$-cell
partial duals.

 \begin{theorem}
 If a graph embedding $G$ with at least two edges has a separating
vertex/face pair, a separating loop, or a separating coloop, then every
partial dual of $G$ has a separating vertex/face pair.  Hence, no
partial dual of $G$ is closed $2$-cell.
 \end{theorem}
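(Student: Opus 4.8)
The plan is to argue entirely in the gem picture, where partial duality recolours edges but never changes the underlying graph, and where a $2$-edge cut is a purely graph-theoretic object insensitive to colour. The discussion preceding the statement matches up the three separating structures of $G$ with the three monochromatic $2$-edge cuts of its gem $M$: a separating vertex/face pair with a yellow cut, a separating loop with a red cut, and a separating coloop with a blue cut. Reading this correspondence, the hypothesis on $G$ says exactly that $M$ has a $2$-edge cut of \emph{some} colour.

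First I would fix an arbitrary $D \subseteq E(G)$ and invoke Proposition \ref{gem-pd} to obtain the gem $M^D$ of $G^D$, which differs from $M$ only by swapping red and blue on the bigons over $D$. In particular $\ug{M^D} = \ug{M}$ as uncoloured graphs. Since an edge cut depends only on the underlying graph, the $2$-edge cut of $M$ furnished by the hypothesis is also a $2$-edge cut of $M^D$.

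The crucial step is to promote this cut to a \emph{yellow} one, for it is a yellow cut, not a red or blue one, that yields a separating vertex/face pair. Here Lemma \ref{gem-ec} does the work, provided $|V(M^D)| > 4$. To see this bound, note that each edge of $G$ gives a red--blue bigon of $M$ that is a $4$-cycle, and every vertex of $M$ lies on exactly one such bigon, so $|V(M^D)| = |V(M)| = 4\,|E(G)|$; the assumption that $G$ has at least two edges then gives $|V(M^D)| \ge 8 > 4$. This is the one and only place the ``at least two edges'' hypothesis is needed. Applying the relevant clause of Lemma \ref{gem-ec}, the gem $M^D$, having a $2$-edge cut and more than four vertices, has a $2$-edge cut of two yellow edges. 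Translating this yellow cut back through the correspondence applied now to $M^D$ and $G^D$ produces a separating vertex/face pair of $G^D$, as required; and since $D$ was arbitrary this holds for every partial dual.

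For the last sentence I would observe that a separating vertex/face pair $(v,f)$ is in particular a \emph{bad} pair: the two yellow cut edges are shared by a red--yellow bigon and a blue--yellow bigon, so $v$ and $f$ are incident at least twice. A $2$-cell embedding possessing a bad vertex/face pair cannot be closed $2$-cell, so no $G^D$ is closed $2$-cell. I expect the one delicate point to be the bidirectional bookkeeping of the colour correspondence in the first paragraph---confirming that each topological separating hypothesis really does force a $2$-edge cut in $M$ regardless of colour---after which the colour-blindness of cuts together with the vertex-count clause of Lemma \ref{gem-ec} carries the argument with no further effort.
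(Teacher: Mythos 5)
Your proposal is correct and follows essentially the same route as the paper: both arguments reduce the hypothesis to a $2$-edge cut in the gem, use the fact that partial duality only recolours red--blue bigons and so preserves the underlying graph, and invoke Lemma \ref{gem-ec} to upgrade the cut to a yellow one, which translates back to a separating (hence bad) vertex/face pair. The only cosmetic difference is that the paper applies Lemma \ref{gem-ec} to $M$ and then observes the yellow cut survives into $M^D$, whereas you transfer the uncoloured cut to $M^D$ first and apply the lemma there; your explicit verification that $|V(M)| = 4\,|E(G)| > 4$ is a detail the paper leaves implicit.
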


 \begin{proof}
 The hypothesis lists the situations under which the gem of $G$ has a
$2$-edge-cut.  Then by Lemma \ref{gem-ec} it has a yellow $2$-edge cut,
which remains a yellow $2$-edge cut in the gem of every partial dual. 
 Hence every partial dual has a separating vertex/face pair, which is a
bad pair, so the partial dual is not closed $2$-cell.
 \end{proof}

 \begin{question}
 For a graph embedding to have a closed $2$-cell partial dual, we have
seen that it is necessary for the gem to have no $2$-edge-cut.  It is
natural to ask whether this is sufficient.  In other words, if $G$ is a
graph embedding with no separating vertex/face pair, does there exist a
closed $2$-cell partial dual of $G$?
 \end{question}

 To conclude, we have developed some techniques in this paper which
allow us to give some conditions under which a partial dual is or is not
closed $2$-cell.  One goal would natually be to try to find ways to use
this to construct closed $2$-cell embeddings of some classes of graph,
to address the Strong Embedding and Cycle Double Cover Conjectures. 
 The difficulty is that partial duality does not preserve the underlying
graph (unlike, for example, partial Petrie duality).  
 It therefore seems difficult to incorporate partial duality into an
attack on these problems.

 %
 %
 %
 %

 \section*{Acknowledgement}

 The authors thank both referees for helpful comments that improved the
presentation of the paper.


\end{document}